\documentclass[11pt]{amsart}
\textheight 8.5in \textwidth 6.5in \evensidemargin .1in \oddsidemargin .1in \topmargin .25in \headsep .1in \headheight 0.2in \footskip .5in

\usepackage{graphicx, caption, subcaption, pinlabel}
\usepackage[mathcal]{euscript}
\usepackage{float}
\usepackage{amsmath,amsthm,amssymb,amsfonts,amscd,latexsym,graphicx,textcomp}

\restylefloat{figure}

\newtheorem{theorem}{Theorem}[section]
\newtheorem{lemma}[theorem]{Lemma}
\newtheorem{corollary}[theorem]{Corollary}

\newtheorem{conjecture}[theorem]{Conjecture}

\newtheorem{question}[theorem]{Question}

\theoremstyle{definition}
\newtheorem{definition}[theorem]{Definition} 
\newtheorem{example}[theorem]{Example}

\theoremstyle{remark}
\newtheorem{remark}[theorem]{Remark}

\numberwithin{equation}{section}







\newcommand{\RR}{{\mathbb R}}
\newcommand{\CC}{{\mathbb C}}




\def\del{\partial}

\def\del{\partial}

\begin{document}

\title{Legendrian Products}

\author[P. Lambert-Cole]{Peter Lambert-Cole}

\address{Department of Mathematics, Louisiana State University \newline
\hspace*{.375in} Baton Rouge, LA 70817, USA}

\subjclass{}
\date{}

\begin{abstract}
This paper introduces two constructions of Legendrian submanifolds of $P \times \RR$, called Legendrian products and spinning, and computes their classical invariants, the Thurston-Bennequin invariant and the Maslov class.  These constructions take two Legendrians $K,L$ and returns a product $K \times L$, they generalize other previous constructions in contact topology, such as frontspinning and hypercube tori, and are equivalent in $\RR^{2n+1}$.  Interestingly, this construction relies upon the explicit embeddings of $K,L$ and not their Legendrian isotopy class.
\end{abstract}

\maketitle

\bigskip
\section{Introduction}
\bigskip

The paper introduces a product construction of Legendrian submanifolds in contact manifolds of the form $P \times \RR$.  It gives a multitude of examples of interesting Legendrian submanifolds in a wide array of contact manifolds, not simply affine space and jet spaces, and generalizes some previous constructions.  This construction also seems useful for technical reasons and allows the construction of relative invariants of Legendrian knots and further study of spaces of Legendrian knots.  We also compute the Thurston-Bennequin invariant and Maslov class for these product Legendrians in terms of geometric and topological properties of the factors and obtain an explicit formula when the ambient contact manifold is $\RR^{2n+1}$.  This calculation demonstrates that the Legendrian isotopy class of the product is determined by the particular embedding of a Legendrian factor and not its isotopy class. It shows that the contact geometry of these spaces is extremely rich.

The motivation for this paper is exploring the contact geometry of manifolds $P \times \RR$, where $P$ is an exact symplectic manifold, by constructing many examples of Legendrian submanifolds.  The Legendrian submanifolds of $\RR^3$, Legendrian knots, are well-explored and have found importance in low-dimensional topology.  However, much less is known about higher dimensional ($\geq 5$) contact manifolds and their Legendrian submanifolds.  Several authors have built examples, including the frontspinning construction of Ekholm, Etnyre and Sullivan \cite{high}, a generalization to spheres of arbitrary dimensions by Golovko \cite{Gol}, hypercube tori defined by Baldridge and McCarty \cite{BM} and tori given as the trace of Legendrian isotopies by Ekholm and Kalman \cite{trace}.  In fact, the former three construction should naturally be understood as products in the sense of this paper. 

As smooth knotting is a codimension 2 phenomenom, Legendrian submanifolds in higher dimensions lose some of their relevance to smooth topology of their ambient spaces.  Yet there are indications that the contact geometry of these spaces is rich with Legendrian submanifolds and this paper gives more evidence for this suspicion.  For instance, Knot Contact Homology is an invariant of smooth knots in $\RR^3$, defined by Lenhard Ng, that assigns to each knot a Legendrian torus in $ST^*\RR^3 \simeq T^*S^2 \times \RR$.  Legendrian Contact Homology is a Legendrian invariant of the torus and is therefore a smooth invariant of the underlying knot.  As of this writing, there are currently no known examples of two inequivalent smooth knots with isomorphic knot contact homology.  

The construction of Legendrian products is straightforward.  For a detailed explanation of the terminology, see section \ref{sec:prelim}.  Let $P \times \RR,$ and $Q \times \RR$ be contact manifolds such that $(P,d \lambda), (Q, d \eta)$ are exact symplectic manifolds and with contact forms $\alpha = dz - \lambda$, $\beta = dz - \eta$, respectivly.  Take Legendrian submanifolds $K \in P \times \RR$ and $L \in Q \times \RR$ with Reeb chords $\{a_i\}, \{b_j\}$ and let $\bar{K}, \bar{L}$ denote their Lagrangian projections in $P, Q$.  Then $\bar{K} \times \bar{L}$ is an exact Lagrangian submanifold of $P \times Q$.
\begin{definition}
\label{def:LegProd}
The \textit{Legendrian product} $K \times L$ is the Legendrian submanifold in $P \times Q \times \RR$ given by the lift of $\bar{K} \times \bar{L}$.  It is an immersed Legendrian submanifold and is embedded if the sets of Reeb chord actions $\{Z(a_i)\}, \{Z(b_j)\}$ are disjoint.
\end{definition}
\textbf{Caution}.  This construction is \textit{not} well-defined up to Legendrian isotopy of one of the factors.  If $K(t)$ is a continuous, one-paramter family of embedded Legendrians in $P \times \RR$, it is not true that $K(t) \times L$ gives a Legendrian isotopy through embeddings.  Legendrian isotopies can introduce or annihilate Reeb chords and change Reeb chord actions, which may introduce self-intersections of $K(t) \times L$ for some $t$.  Thus the product construction depends upon the explicit embeddings $K \hookrightarrow P \times \RR$ and $L \hookrightarrow Q \times \RR$ and not simply on the Legendrian isotopy class of $K$ or $L$.

In fact, by varying the Legendrian embedding of a factor within its Legendrian isotopy class, one can obtain infinitely many, non-Legendrian isotopic products.

\begin{theorem}
\label{thrm:infty}
Let $K \in \RR^{2n+1}, L \in \RR^{2m+1}$ be chord generic Legendrians such that $n,m$ have different parity.  Then there exists an infinite family of Legendrians $\{K_i\}$ all Legendrian isotopic to $K$ such that the family of Legendrian products $\{K_i \times L\}$ are pairwise non-Legendrian isotopic.
\end{theorem}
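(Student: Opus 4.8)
The plan is to distinguish the products by their Thurston--Bennequin invariants. The key input is the formula for $\mathrm{tb}(K\times L)$ in $\RR^{2(n+m)+1}$, whose structure I first recall. The Reeb chords of $K\times L$ come in three families: a Morse--Bott family modelled on $K$ (one copy of $K$ for each chord $b_j$ of $L$), a Morse--Bott family modelled on $L$ (one copy of $L$ for each chord $a_i$ of $K$), and a finite set of isolated \emph{mixed} chords indexed by the pairs $(a_i,b_j)$. After a Morse--Bott perturbation the first two families contribute terms depending only on $\chi(K),\chi(L)$ and on the signed chord counts $\mathrm{tb}(K),\mathrm{tb}(L)$, all of which are Legendrian isotopy invariants of the factors. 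The mixed chords are the interesting part: each pair $(a_i,b_j)$ produces two isolated chords, of actions $Z(a_i)+Z(b_j)$ and $|Z(a_i)-Z(b_j)|$, and a direct sign computation should show that their combined contribution to $\mathrm{tb}$ is of the form $(-1)^{nm}\sigma(a_i)\sigma(b_j)\,[\,1+(-1)^{m}\,]$ when $Z(a_i)>Z(b_j)$ and $(-1)^{nm}\sigma(a_i)\sigma(b_j)\,[\,1+(-1)^{n}\,]$ when $Z(a_i)<Z(b_j)$, where $\sigma$ denotes the double-point sign of a chord.

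This is where the parity hypothesis enters, and it is the heart of the argument. If $n$ and $m$ have the same parity, both bracketed factors are equal (both $2$ or both $0$), so the mixed term collapses to $\pm 2\,\mathrm{tb}(K)\,\mathrm{tb}(L)$ or to $0$ and is a Legendrian isotopy invariant. But if $n$ and $m$ have different parity---say $n$ odd and $m$ even---then one bracket is $2$ and the other is $0$, so only the pairs with $Z(a_i)>Z(b_j)$ survive and
\[
\mathrm{tb}(K\times L)=(\text{type I})+(\text{type II})+2(-1)^{nm}\!\!\sum_{Z(a_i) > Z(b_j)}\!\!\sigma(a_i)\,\sigma(b_j),
\]
where the first two summands depend only on the isotopy classes of $K$ and $L$. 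The mixed term now depends on the relative \emph{heights} of the Reeb chord actions of the two factors---exactly the embedding-dependent data highlighted after Definition \ref{def:LegProd}.

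To produce the family $\{K_i\}$ I would repeatedly insert canceling pairs of Reeb chords. Fix the Reeb chord $b_0$ of $L$ of least action (one exists, since a closed Legendrian in $\RR^{2m+1}$ always has a chord). Using the standard local Legendrian isotopy that births a pair of Reeb chords $a^+,a^-$ of opposite double-point sign, supported away from the existing chords of $K$, and then sliding their actions by a further local isotopy, I form $K_i$ from $K$ by inserting $i$ such pairs with $\sigma(a^+_k)=+1$, each $Z(a^+_k)$ just above $Z(b_0)$ (hence above only $b_0$ among the $b_j$) and each $Z(a^-_k)$ below all of $\{Z(b_j)\}$; all new actions are chosen generically so as to be disjoint from $\{Z(b_j)\}$, keeping every $K_i\times L$ embedded. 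Each $K_i$ is Legendrian isotopic to $K$, so $\mathrm{tb}(K)$ and $\chi(K)$ are unchanged, whence the type I, type II and constant contributions to $\mathrm{tb}(K_i\times L)$ are unchanged. In the mixed term each $a^+_k$ contributes $\sigma(a^+_k)\sigma(b_0)=\pm 1$ and each $a^-_k$ contributes $0$, so the sum changes by exactly $\pm i$. Hence $\mathrm{tb}(K_i\times L)=\mathrm{tb}(K\times L)\pm 2i$ takes infinitely many distinct values, and since $\mathrm{tb}$ is a Legendrian isotopy invariant the products $\{K_i\times L\}$ are pairwise non-Legendrian isotopic.

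The main obstacle is the sign bookkeeping behind the displayed mixed-chord contribution: one must compute the double-point sign in $\RR^{2(n+m)}$ of each of the two mixed chords attached to a pair $(a_i,b_j)$, tracking both the reshuffling of the $K$- and $L$-tangent spaces (which produces the $(-1)^{nm}$) and which sheet is the over-strand (which produces the $(-1)^{m}$ or $(-1)^{n}$, according to the order of actions). It is precisely the asymmetry between $[\,1+(-1)^{m}\,]$ and $[\,1+(-1)^{n}\,]$ under different parity that converts a would-be isotopy invariant into an order-dependent quantity. Secondary points to verify are that the pair-birth move is a genuine Legendrian isotopy realizing opposite double-point signs, and that the action-sliding together with the genericity of the new actions keeps every $K_i\times L$ embedded.
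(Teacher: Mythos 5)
Your proposal is correct and follows essentially the same route as the paper: both arguments isolate the embedding-dependent $\tau$-term in the $tb$ formula, insert canceling pairs of Reeb chords into $K$ by a local Legendrian isotopy, and arrange the new actions so that exactly one chord of $L$ separates the pair, whereupon the different-parity hypothesis makes the two contributions reinforce rather than cancel, shifting $tb(K_i\times L)$ by $\pm 2$ per pair. The only cosmetic difference is that the paper straddles a chosen chord $e$ of $L$ after rescaling, while you anchor the pair around the least-action chord $b_0$; the mechanism is identical.
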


To prove this, we calculate the Thurston-Bennequin invariant of these Legendrian products in terms of the embeddings of $K,L$.  There are two well-known, classical invariants of Legendrian knots: the Thurston-Bennequin number and the rotation number.  These have been generalized to higher dimensions by Tabachnikov and by Ekholm, Etnyre and Sullivan.  
The latter described the Thurston-Bennequin invariant for homologically-trivial Legendrians $L$ as the linking number of $L$ with a pushoff $L'$ of itself along the Reeb vector field
\[tb(L) = lk(L,L')\]
In our setting, when the ambient contact manifold is $P \times \RR$, this can be computed in a manner similar to the writhe of knots. The Maslov class is a cohomology class $\mu \in H^1(L; \mathbb{Z})$ that assigns to each 1-dimensional homology class the Maslov index of a path representing that class.  In dimension 3, the rotation number is $\frac{1}{2} \mu(\gamma)$ where $\gamma$ is a generator of $H_1(S^1; \mathbb{Z})$.
Suppose that $K \in \RR^{2n+1}$ and $L \in \RR^{2m+1}$ are chord generic Legendrians with Reeb chords $\{a_i\}, \{b_j\}$.  Then we can obtain the following formula for the classical invariants of their product.
\begin{theorem}
\label{thrm:TB}
The Thurston-Bennequin number of $K \times L$ is given by:
\[tb(K \times L) = (-1)^{mn} \left( tb(K) \chi(T^*L) + \chi(T^*K) tb(L) + tb(K) tb(L) + \sum_{i,j} \tau(a_i,b_j) \sigma(a_i) \sigma (b_j) \right) \]
where 
\[
  \tau(a_i,b_j) = \left\{
  \begin{array}{l l}
    (-1)^{n} & \quad \text{if $\mathcal{Z}(a_i) < \mathcal{Z}(b_j)$}\\
    (-1)^{m} & \quad \text{if $\mathcal{Z}(a_i) > \mathcal{Z}(b_j)$}\\
  \end{array} \right.
\]
The Maslov class of $K \times L$ is given by
\[\mu_{K \times L} = \mu_k \oplus \mu_L \in H^1(K \times L; \mathbb{Z}) \simeq H^1(K; \mathbb{Z}) \oplus H^1(L; \mathbb{Z})\]
\end{theorem}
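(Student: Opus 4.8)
The plan is to prove the two assertions separately, starting with the Thurston--Bennequin number. The starting point is the identity $tb(K\times L)=lk(K\times L,(K\times L)')$, where $(K\times L)'$ is the pushoff of the product along the Reeb field $\partial_z$, together with the fact that this linking number is read off from the Lagrangian projection $\bar K\times\bar L$ as a signed count of the Reeb chords of $K\times L$ --- the same recipe that expresses $tb(K)$ and $tb(L)$ through the double points of $\bar K$ and $\bar L$. The essential preliminary observation is that $K\times L$ is \emph{not} chord generic: a point of $P\times Q$ with preimages $(k,l),(k',l')$ satisfies $\bar K(k)=\bar K(k')$ and $\bar L(l)=\bar L(l')$, so the double points occur in positive-dimensional Bott families. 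I would first record their structure and then perturb to an honest chord-generic Legendrian, which does not change $tb$ since it is a Legendrian isotopy invariant of the embedded product.

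The enumeration of Reeb chords splits into three types according to whether each coincidence $\bar K(k)=\bar K(k')$, $\bar L(l)=\bar L(l')$ is a genuine double point or a diagonal coincidence $k=k'$ (resp. $l=l'$). This yields: (i) for each chord $b_j$ of $L$, a family of chords of $K\times L$ parametrized by a copy of $K$; (ii) for each chord $a_i$ of $K$, a family parametrized by a copy of $L$; and (iii) for each pair $(a_i,b_j)$, isolated chords --- a ``long'' chord of action $\mathcal{Z}(a_i)+\mathcal{Z}(b_j)$ and a ``mixed'' chord of action $|\mathcal{Z}(a_i)-\mathcal{Z}(b_j)|$ --- sitting over the four local sheets $(k_1,l_1),(k_1,l_2),(k_2,l_1),(k_2,l_2)$. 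Perturbing by a Morse function on the parametrizing factor breaks each Bott family into isolated chords whose signed count is its Euler characteristic; carrying along the sign of the defining chord of the other factor, family (i) contributes (after summing over $j$) $\chi(T^*K)\,tb(L)$ and family (ii) contributes $tb(K)\,\chi(T^*L)$. The isolated chords of type (iii) contribute the remaining two terms: the long chords assemble into $tb(K)\,tb(L)$, while the mixed chords produce $\sum_{i,j}\tau(a_i,b_j)\sigma(a_i)\sigma(b_j)$, with the value of $\tau$ governed by which of $\mathcal{Z}(a_i),\mathcal{Z}(b_j)$ is larger, since the local crossing configuration --- and hence the sign of that chord's contribution to the linking number --- flips precisely when the action ordering reverses. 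Factoring out the global orientation sign $(-1)^{mn}$ coming from reordering the product coordinates $\RR^{2n}\times\RR^{2m}$ then gives the stated formula.

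For the Maslov class I would argue via the Gauss map. The tangent plane to $\bar K\times\bar L$ at $(\bar K(k),\bar L(l))$ is the direct sum $T\bar K\oplus T\bar L$, so the Gauss map of the product factors as $(\mathrm{Gauss}_K,\mathrm{Gauss}_L)$ followed by $\oplus\colon\Lambda(n)\times\Lambda(m)\to\Lambda(n+m)$. Since the generator of $H^1(\Lambda(n+m);\ZZ)$ is the pullback of the generator of $H^1(U(1))$ under the squared-determinant map, and $\det^2$ is multiplicative under direct sum, the pullback of the ambient Maslov class under $\oplus$ is $\mu_n\boxplus\mu_m$. Restricting along the product Gauss map and identifying $H^1(K\times L;\ZZ)\cong H^1(K;\ZZ)\oplus H^1(L;\ZZ)$ by the Künneth theorem (there is no mixed $H^1\otimes H^1$ summand in degree one) yields $\mu_{K\times L}=\mu_K\oplus\mu_L$; concretely, a loop supported in the $K$-factor sees a constant $\bar L$-plane, whose Maslov index vanishes, so only $\mu_K$ survives, and symmetrically for $L$.

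The main obstacle is the sign bookkeeping in the Thurston--Bennequin computation: pinning down the sign $\sigma$ attached to each perturbed chord, verifying that the Morse-perturbed Bott families really sum to the Euler characteristics with the correct sign, and --- most delicately --- establishing the action-dependent value of $\tau(a_i,b_j)$ together with the global factor $(-1)^{mn}$. All of these require a careful, dimension-dependent orientation analysis of the four local sheets meeting over each pair $(a_i,b_j)$ and of how the Reeb pushoff interleaves them; once the local models and their orientations are fixed, the remaining steps are essentially combinatorial.
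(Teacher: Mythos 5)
Your proposal is correct and follows essentially the same route as the paper: perturb the non-chord-generic product by Morse functions on the factors, enumerate the resulting Reeb chords (the Morse-broken Bott families contributing $\chi(T^*K)\,tb(L)$ and $tb(K)\,\chi(T^*L)$, plus the two isolated Reeb/Reeb chords per pair $(a_i,b_j)$ of actions $\mathcal{Z}(a_i)+\mathcal{Z}(b_j)$ and $|\mathcal{Z}(a_i)-\mathcal{Z}(b_j)|$, whose signs carry the global $(-1)^{mn}$ and the action-dependent $\tau$), and sum the signs. The only divergence is in how additivity of the Maslov index under direct sum of Lagrangian paths is established --- you use the $\det{}^2$ map on the Lagrangian Grassmannian, while the paper splits the Maslov cycle and the signature of the crossing quadratic form --- but these are standard, equivalent characterizations and the rest of the Maslov argument (product Gauss map plus K\"unneth) is identical.
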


The last term in the $tb$ formula requires some elaboration.  Locally, each transverse double point is the intersection of two open $n$-disks in a single point.  Just as computing the sign of a knot crossing requires knowing which strand passes over the other, computing the sign of a Reeb chord requires knowing which disk passes "over" the other.  In the product $K \times L$, after a suitable perturbation, there is a Reeb chord $c_{i,j}$ for each pair $(a_i,b_j)$ of Reeb chords of $K$ and $L$.  However, determining which disk is "over" and which is "under" depends upon the relative lengths of the Reeb chords $a_i, b_j$.

\subsection*{Spinning}
Using this product construction, it is also possible to build examples of Legendrian submanifolds in closed contact manifolds.  Let $M$ be a contact manifold, $L \subset M$ a Legendrian submanifold, $M \hookrightarrow M'$ a contact embedding of codimension $2m$ with trivial conformal symplectic normal bundle and $K \subset \RR^{2m+1}$ a Legendrian submanifold.  Then we can identify a neighborhood of $L$ with a neighborhood of the 0-section in $J^1(L) \times \RR^{2m}$ and find $L \times K$ as a Legendrian submanifold of $M'$, which we denote by $K \times_M L$ and refer to as the \textit{spinning} of $K$ by $L$.  It is clear from the construction that this is invariant under Legendrian isotopies of $L$ in $M$, contact isotopies of $M$ in $M'$ and compactly supported isotopies of $K$.

In \cite{high}, Etnyre, Ekholm and Sullivan defined a construction, called frontspinning, which takes a Legendrian $L \subset \RR^{2n-1}$ and produces a Legendrian $\Sigma L \subset \RR^{2n+1}$ of topological type $L \times S^1$.  In fact, these should more naturally be seen as the product of $L$ with a standard Legendrian unknot with $tb = -1, r = 0$.  Consider $S^{2k-1}$ as the unit sphere in $\RR^{2k}$ and take the obvious contact embedding $S^{2k-1} \hookrightarrow \RR^{2n+1} \simeq \RR^{2k} \times \RR^{2n-2k+1}$.  We prove the following theorem and corollary.
\begin{theorem}
\label{thrm:spin}
Let $L$ be a Legendrian submanifold of $S^{2k-1}$ (equiv $\RR^{2k-1}$) and $K$ a Legendrian submanifold of $\RR^{2n-2k+1}$, chosen so that all Reeb chord actions of $K$ are much less than all Reeb chord actions of $L$.  Then $K \times_{S^{2k-1}} L$ and $K \times L$ are Legendrian isotopic in $\RR^{2n+1}$
\end{theorem}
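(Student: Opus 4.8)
The plan is to show that the spinning construction is, up to a contact isotopy of the ambient $\RR^{2n+1}$, literally the Legendrian product of $K$ with $L$ with $K$ scaled into a thin normal tube, and that the action hypothesis is exactly what guarantees this scaled product is embedded and that the two placements agree. The guiding observation is that although $S^{2k-1}$ carries the round contact structure with its periodic (Hopf) Reeb flow, the spinning $K \times_{S^{2k-1}} L$ uses only the contact germ of a neighborhood of $L$ together with the trivialization of the conformal symplectic normal bundle. Since $L$ has positive codimension it lies in a Darboux chart $\RR^{2k-1} \cong S^{2k-1}\setminus\{pt\}$, so the global round geometry never enters and only the standard flat structure along $\RR^{2k-1}$ is relevant.

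First I would restrict the contact embedding to this chart and analyze the normal geometry over the whole of $\RR^{2k-1}$, not merely over a small Weinstein neighborhood of $L$, so that the Reeb chords of $L$ — which connect far-apart sheets and leave any such neighborhood — are captured. Because $\RR^{2k-1}$ is contractible, the conformal symplectic normal bundle is trivial over it, and the contact neighborhood theorem identifies a neighborhood of $\RR^{2k-1} \subset \RR^{2n+1}$ with a product model $(\RR^{2k-2}\times\RR_z)\times\RR^{2m}$, where $\RR^{2m}$ is the normal symplectic fiber and $K \subset \RR^{2m+1}$ is inserted in the normal directions. This is exactly the ambient $P \times Q \times \RR$ of the Legendrian product.

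The second step is to identify the two Legendrians in this model. Reading off Lagrangian projections, the spun Legendrian projects to $\bar L \times \bar K$ in $\RR^{2k-2}\times\RR^{2m}$: the base factor is the genuine immersed projection $\bar L$ of $L$, carrying its chords $\{b_j\}$, while each normal fiber carries a copy of $\bar K$ with its chords $\{a_i\}$. This is precisely the Lagrangian projection defining $K \times L$ in Definition \ref{def:LegProd}, so in these coordinates the spinning is a Legendrian product. The spun copy of $K$ sits in a thin tube, so its Reeb actions are uniformly much smaller than those of $L$; the hypothesis $\cZ(a_i)\ll\cZ(b_j)$ is exactly what makes the given product $K \times L$ share this ordering.

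Finally I would assemble the Legendrian isotopy from two pieces: the contact isotopy straightening the round embedding of $\RR^{2k-1}$ to the standard split inclusion $(x,z)\mapsto(x,0,z)$, and a rescaling of $K$ in the normal directions interpolating its actual tube radius with the scale dictated by the hypothesis. The hard part will be keeping the family embedded throughout. New self-intersections of a Legendrian product can appear only as Reeb chords born from coincidences of action values, so I must verify that along the entire isotopy the two sets $\{\cZ(a_i)\}$ and $\{\cZ(b_j)\}$ stay disjoint and in the same order. This is where the action hypothesis is indispensable: it forces the sign data $\tau(a_i,b_j)$ of Theorem \ref{thrm:TB} to remain constant, so the over/under pattern of every crossing $c_{i,j}$ is preserved, the lift stays embedded by Definition \ref{def:LegProd}, and no crossing changes can occur. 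Verifying that the straightening can be carried out without the normal tube swelling enough to meet the base $\bar L$ — equivalently, without violating this action ordering — is the technical crux of the argument.
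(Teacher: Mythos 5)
Your proposal is correct and follows essentially the same route as the paper: both identify the spinning with the Legendrian product inside a Darboux-type normal form for the contact embedding $S^{2k-1}\hookrightarrow \RR^{2n+1}$, then transport one placement to the other by an ambient contact isotopy, with the hypothesis $\cZ(a_i)\ll\cZ(b_j)$ guaranteeing that the inserted factor fits in the normal tube and the product stays embedded. The paper's implementation is slightly lighter than yours --- it isotopes the base Legendrian into a compact Darboux ball adapted to the sphere and invokes the contact disk theorem, so every stage is an ambient isotopy and the embeddedness bookkeeping you flag as the technical crux never arises (your non-ambient rescaling step is still fine, since it only shrinks the actions of $K$ and hence keeps the two action sets disjoint).
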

Let $W$ denote the Whitney embedding of the standard Legendrian unknot of $S^3$ with exactly one Reeb chord of length 1.
\begin{corollary}
\label{cor:front}
Let $L \subset \RR^{2n-1}$ be Legendrian all of whose Reeb chords have action $\mathcal{Z}(c) \ll 1$.  Then $\Sigma L$ and $W \times L$ are Legendrian isotopic in $\RR^{2n+1}$.
\end{corollary}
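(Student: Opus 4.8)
The plan is to realize the front spinning $\Sigma L$ as an instance of the spinning construction introduced above and then to invoke Theorem \ref{thrm:spin} to trade that spinning for an honest Legendrian product. Concretely, I will show that $\Sigma L$ coincides, up to Legendrian isotopy, with $L \times_{S^3} W$, the spinning of $L$ by the standard unknot $W \subset S^3$. The topological type already matches: since $W$ is a circle, $L \times_{S^3} W$ has type $L \times W \cong L \times S^1$, which is exactly the type of $\Sigma L$. The embedding data also line up. Taking the base to be $S^3 = S^{2\cdot 2 -1}$ (so $k=2$) sitting in $\RR^{2n+1}$ with even codimension $2n-2$, the conformal symplectic normal bundle has rank $2m = 2n-2$, so the spun factor occupies $\RR^{2m+1} = \RR^{2n-1}$, precisely the ambient space of $L$.

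The key step is the identification $\Sigma L \simeq L \times_{S^3} W$. Recall that the front spinning of \cite{high} is produced by placing the front of $L$ in a half-space $\{x_{n-1} \ge 0\}$ and rotating the last front coordinate $x_{n-1}$ through the plane it spans with a new coordinate $x_n$, the rotation angle sweeping out the $S^1$ factor. I will match this with the spinning construction by observing that the core circle swept out by this rotation is exactly the unknot $W$, whose Lagrangian projection is the Whitney figure-eight with a single Reeb chord. One then checks that the tubular neighborhood model defining $L \times_{S^3} W$ — identifying a neighborhood of $W$ in $S^3$ with a neighborhood of the zero section in $J^1(W) \times \RR^{2m}$ and inserting $L$ into the $\RR^{2m}$ factor as the angle varies — reproduces the rotated front. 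This yields a contactomorphism carrying one model onto the other, hence the desired Legendrian isotopy.

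Granting the identification, the conclusion is immediate. The factor $W$ has a single Reeb chord of action $1$, while by hypothesis every Reeb chord $c$ of $L$ satisfies $\mathcal{Z}(c) \ll 1$; thus the Reeb chord actions of the spun factor $L$ are all much smaller than those of the base $W$. This is exactly the hypothesis of Theorem \ref{thrm:spin}, applied with the base Legendrian taken to be $W \subset S^3$ and the spun factor taken to be $L$, so that theorem gives a Legendrian isotopy $L \times_{S^3} W \simeq L \times W$ in $\RR^{2n+1}$. Finally, the Legendrian product is symmetric up to the linear contactomorphism of $\RR^{2n+1} \cong \RR^{3} \times \RR^{2n-2} \times \RR$ that interchanges the two symplectic factors, so $L \times W \simeq W \times L$. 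Chaining these isotopies gives $\Sigma L \simeq W \times L$, as claimed.

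The main obstacle is the first identification. One must verify carefully that the rotation defining front spinning agrees, as a \emph{contact} embedding and not merely topologically, with the insertion of $L$ along $W$ via the normal-bundle model of the spinning construction. The delicate point is tracking the contact form, and hence the Reeb dynamics, through the rotation so that the two constructions coincide on the nose rather than only up to a homotopy of immersions; once this is pinned down, the remaining steps are formal applications of Theorem \ref{thrm:spin} and the symmetry of the product.
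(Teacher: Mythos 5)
Your argument follows the paper's route exactly: the identification of $\Sigma L$ with the spinning of $L$ over the unknot $W \subset S^3$ is precisely the paper's Lemma \ref{lemma:sigmaL} (proved there by explicitly trivializing $TW$, $J(TW)$ and $CSN_{\RR^{2n+1}}(W)$ and checking that restricting to $L$ in the fibers of $\langle R \rangle \oplus CSN_{\RR^{2n+1}}(W)$ reproduces the rotated front), and the passage from that spinning to the product $W \times L$ is the same application of Theorem \ref{thrm:spin}, so the "main obstacle" you flag is exactly the content of that lemma rather than a gap in the strategy. The only cosmetic difference is your choice of factor ordering, which forces the extra (harmless) symmetry step $L \times W \simeq W \times L$ at the end.
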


Golovko has recently \cite{Gol} extended the frontspinning construction to produce Legendrians in $\RR^{2n+2k-1}$ of topological type $L \times S^k$ and this can be interpreted as products with Whitney spheres of arbitrary dimension.

\subsection*{Legendrian contact homology}
The Thurston-Bennequin invariant is sometimes blind, as in \cite{high} it is shown to reduce to the Euler characteristic when the ambient manifold is $\RR^{2n+1}$ for even $n$.  So the Thurston-Bennequin invariant is not useful for distinguishing products when the dimensions of the factors have the same parity. The results here suggest that a more powerful invariant, such as Legendrian contact homology, will reveal many interesting examples of Legendrian submanifolds in higher dimensional contact manifolds. 

Legendrian contact homology is an invariant of Legendrian submanifolds that fits into the general framework of Symplectic Field Theory.  It has been constructed by Ekholm, Etnyre and Sullivan when the ambient contact manifold is some $P \times \RR$ \cite{LCH}, \cite{Jets}.  To each Legendrian $L$, it associates a diferential graded algebra, which is invariant up to an appropriate algebraic equivalence.  The algebra is the unital tensor algebra over the group ring $\mathbb{Z}[H_1(L)]$ generated by the Reeb chords of $L$.  The differential counts rigid, punctured holomorphic maps $u: (D^2,\del D^2) \rightarrow (P,\overline{L})$ with a single positive puncture and arbitrarily many negative punctures.  In dimension 3, the Legendrian contact homology of Legendrian knots is known as the Chekanov-Eliashberg DGA and by the Riemann mapping theorem, can be computed combinatorially from either the front or Lagrangian projection of a knot.  

The Legendrian contact homology of the product $L \times K$ can be computed in terms of geometric and topological data of the factors (see \cite{product}).  Holomorphic disks on the product $L \times K$ are determined by and can be constructed from holomorphic disks and gradient flow lines on the factors.  However, an interesting complication arises from the dependence on relative Reeb chord lengths that first appeared in the Thurston-Bennequin calculation.  Specifically, the LCH of the product potentially includes information determined by holomorphic disks with multiple positive punctures.  This information is ignored in the standard version of Legendrian contact homology.  This suggests that a full understanding of the LCH of products is more interesting than one might expect from the construction and from results of Ekholm, Etnyre and Sabloff on the linearized contact homology of frontspun Legendrians $\Sigma L$ \cite{Duality}.

Given that the LCH of Legendrian knots can be computed combinatorially and that Ekholm has established a correspondence between rigid holomorphic disks on $L \subset J^1(\RR)$ and rigid gradient flow trees on $L$, it is reasonable to conjecture the following:
\begin{conjecture}
The Legendrian contact homology of the products of Legendrian knots can be computed combinatorially.
\end{conjecture}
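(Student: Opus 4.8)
The plan is to reduce the full differential graded algebra of $K \times L$ to combinatorial data attached to the two factors, and then to check that each such piece is itself combinatorially computable. By the product description of holomorphic curves (see \cite{product}), every rigid punctured disk contributing to the LCH of $K \times L$ is assembled from rigid holomorphic disks on the Lagrangian projections $\bar{K}, \bar{L} \subset \CC$ together with rigid gradient flow lines on the factors $K, L$, organized according to the relative ordering of Reeb chord actions that already governed the sign term $\tau(a_i,b_j)$ in Theorem \ref{thrm:TB}. Thus it suffices to make combinatorial each of three inputs: (i) the disks on the factors, (ii) the flow lines on the factors, (iii) the bookkeeping of actions, and, crucially, to handle the product disks carrying more than one positive puncture.

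First I would dispose of inputs (i) and (iii), which are immediate. Each factor is a Legendrian knot in $\RR^3$, so by the Riemann mapping theorem its rigid holomorphic disks are exactly the immersed polygons in the Lagrangian projection counted by the Chekanov--Eliashberg DGA, which is combinatorial. The action data $\{\mathcal{Z}(a_i)\}$, $\{\mathcal{Z}(b_j)\}$ and their ordering are finite combinatorial data read off directly from the embeddings. The grading poses no difficulty either: by Theorem \ref{thrm:TB} the Maslov class splits as $\mu_K \oplus \mu_L$, so the $\ZZ$- or $\ZZ/m$-grading on generators is determined additively from the factor gradings. For input (ii) I would appeal to Ekholm's correspondence between rigid holomorphic disks and rigid gradient flow trees. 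Because each factor knot lives over the one-dimensional base of $J^1(\RR)$, the relevant flow configurations degenerate to especially simple shapes, determined by the critical points of the front and the ordering of their $z$-values, and so are again enumerable from the front projection with no moduli analysis beyond counting.

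The hard part will be the disks with several positive punctures. Standard LCH counts only rigid disks with a single positive puncture, and the Chekanov--Eliashberg and flow-tree machinery is tailored to that case; but, as noted in the preceding discussion, the LCH of the product genuinely detects disks with multiple positive punctures, and these must be counted combinatorially and with correct signs. I would attack this by an SFT-type degeneration, stretching $K \times L$ along the hypersurface separating the two factors, which should break each multi-positive-puncture product disk into factor disks and flow configurations and reduce its count to the same data as in (i)--(iii), now organized by a generalized flow tree permitting several positive ends. The central obstacle is to prove that this degeneration is both surjective and transverse, that every rigid multi-puncture product disk arises in this way and is cut rigidly by the stretching, together with fixing a coherent orientation scheme so that the combinatorial count reproduces the signs of $\del$.

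Once this correspondence is in hand, the algorithm assembles: over the finite set of action orderings, one enumerates the factor disks (via Chekanov--Eliashberg), the factor flow lines (via the front projections), and the generalized multi-puncture configurations, then records their contributions to $\del$ on the generators $c_{i,j}$ of $K \times L$ with the signs and gradings above. I expect the multi-positive-puncture analysis to be where essentially all the difficulty concentrates, and where a genuinely new combinatorial model, extending the gradient flow tree calculus to trees with several positive ends, must be developed and matched to the analytic count.
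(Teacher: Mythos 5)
The statement you are addressing is stated in the paper only as a conjecture; the paper offers no proof of it, so there is nothing of the author's to compare your argument against. Judged on its own terms, what you have written is a research program rather than a proof, and you say as much yourself: you identify the multi-positive-puncture disks as the place ``where essentially all the difficulty concentrates, and where a genuinely new combinatorial model \dots must be developed and matched to the analytic count.'' That sentence is an accurate description of the open problem, not a resolution of it. Concretely, the gaps are these. (1) Your reduction of rigid disks on $K \times L$ to factor disks and gradient flow lines rests on the product formula of \cite{product}, which the paper cites only as ``in preparation''; you cannot treat it as an established black box, and in particular its statement for disks with more than one positive puncture is exactly the part that is not standard. (2) For the multi-positive-puncture count you propose an SFT-type neck-stretching, but you do not define the degeneration precisely, do not prove the compactness and gluing statements that would make it a bijection between rigid product disks and broken configurations, do not establish transversality of the limiting objects, and do not construct the coherent orientation scheme needed to match signs. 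Each of these is a substantial analytic theorem in its own right (the single-positive-puncture analogue already occupies Ekholm's flow-tree paper \cite{grad}), and naming them as ``the central obstacle'' does not discharge them. (3) The ``generalized flow trees with several positive ends'' that your algorithm would enumerate are never defined, so even the combinatorial side of the proposed correspondence does not yet exist.

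The parts you do settle --- that the factor disks are combinatorial by Chekanov--Eliashberg, that the actions and their ordering are finite data, and that the grading splits via the Maslov class computation of Theorem \ref{thrm:TB} --- are correct but are the easy ingredients; they were already implicit in the paper's discussion motivating the conjecture. As it stands, your proposal restates the conjecture together with a plausible strategy and an honest list of what remains to be proved. That is a reasonable way to open a paper attacking the problem, but it is not a proof.
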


Finally, for $L \subset \RR^{2n+1}$ and $K \subset \RR^{2m+1}$ suitably small, theorem \ref{thrm:spin} shows that the product $L \times K$ is well-defined up to Legendrian isotopy of the factors.  Thus, the Legendrian contact homology of the product $L \times K$ is a Legendrian invariant of both of the factors.  Yet since it depends upon geometric data not included in the LCH of the factors, this invariant may provide more information about the Legendrian isotopy class of $L$ and $K$ than the LCH of the factors individually.
\begin{question}
(Relative Legendrian invariants) Is LCH($L \times K$) a stronger invariant of $L$ than $LCH(L)$ for some $K$?  
\end{question}
In particular, one may fix $L$ and vary $K$, using the LCH of the product to probe the contact geometry of $L$.

\subsection*{Organization}  The outline of this paper is as follows.  Section \ref{sec:prelim} reviews some preliminary content and definitions about contact geometry, Legendrians, Reeb chords in the relevant setting of manifolds of the form $P \times \RR$.  Section \ref{sec:TB} contains the proofs of theorems \ref{thrm:TB} and \ref{thrm:infty}.  This first requires constructing a suitable perturbation of the conormal lift as it is not chord generic.  Section \ref{sec:spin} defines spinning and proves theorem \ref{thrm:spin} and \ref{cor:front}.  Section \ref{sec:exam} discusses examples, including products of Whitney spheres and products of Legendrian knots.  

\section*{Acknowledgements}
I would like to thank my advisor, Scott Baldridge, for advice and encouragement, as well as John Etnrye, Forrest Gordon, Kate Kearney, Ben McCarty and Shea Vela-Vick for helpful conversations.

\section{Preliminaries}
\label{sec:prelim}

Let $P$ be an exact symplectic manifold of dimension $2n$, meaning the symplectic form $\omega$ can be expressed as $\omega = d \lambda$ some primitive $\lambda$.  A submanifold $L \subset P$ is \textit{isotropic} if the restriction $\omega|_L$ is identically 0 and \textit{Lagrangian} if it is isotropic and has dimension $n$.  A Lagrangian submanifold is \textit{exact} if the restriction $\lambda_L$ is exact.  For any (immersed) Lagrangian $L$, there is an oriented diffeomorphism between the normal bundle $\nu(L)$ of $L$ in $P$ and the cotangent bundle $T^*L$.  Moreover, there is a symplectomorphism between some neighborhood of $L$ in $P$ and a neighborhood of the 0-section in $T^*L$.

The 1-form $\alpha = dz - \lambda$ is a contact form on the product manifold $P \times \RR$ ($z$ is the coordinate on the second factor) as $\alpha \wedge (d \lambda)^n = \omega^n \wedge dz \neq 0$.  This contact form induces a contact structure $\xi = \text{ker} (\alpha)$, which is a maximally nonintegrable hyperplane field on $P \times \RR$, and a \textit{Reeb vector field} $R_{\alpha} = \partial_z$.  The contact manifold $(P \times \RR, \xi)$ is called the \textit{contactization} of $P$.  A submanifold $L$ of some contact manifold $(M,\xi)$ is \textit{isotropic} if it is everywhere tangent to the hyperplane field $\xi$ and is  \textit{Legendrian} if it is isotropic and has dimension $n$.  A continuous, one-parameter family of Legendrian submanifolds is a \textit{Legendrian isotopy}.  Let $\Pi_P: P \times \RR \rightarrow P$ be the projection map onto the first factor, which is called the \textit{Lagrangian projection}.  Throughout this paper, we distinguish points and sets $x, U \subset P \times \RR$ from their images under $\Pi_P$ through bar notation i.e $\Pi_P(x) = \bar{x}, \Pi_P(U) = \bar{U}$.

The projection $\overline{L}$ of a Legendrian submanifold is exact Lagrangian, since for $Z:L \rightarrow \RR$, the restriction of the projection $\Pi: P \times \RR \rightarrow \RR$ to $L$, the Legendrian condition implies that $\lambda_{\overline{L}} = d Z$.  Furthermore, given an exact Lagrangian submanifold $\overline{L}$ in $P$, there exists a \textit{lift} $L \subset P \times \RR$ of $\overline{L}$ such that $L$ is an immersed Legendrian submanifold.  This can be chosen uniquely up to a translation in the $z$-direction.

A \textit{Reeb chord} $c$ for some Legendrian submanifold $L$ is an integral curve of the Reeb vector field that begins and ends on $L$.  Since the Reeb vector field flows in the $z$-direction, the endpoints $c^+, c^-$ of the chord project to the same point $\bar{c}$ and every multiple point of the projection $\bar{L}$ lifts to at least one Reeb chord. A Legendrian submanifold is \textit{chord generic} if its Lagrangian projection has a finite number of transverse double points.  Double points of $\bar{L}$ correspond to unique Reeb chords since $\RR$ is not compact and this implies that $L$ has a finite number of Reeb chords.  The \textit{action} $\mathcal{Z}(c)$ of a Reeb chord is its length, which is equal to the difference of $z$-coordinates $Z(c^+) - Z(c^-)$.  Choose neighborhoods $U_+, U_-$ around $c^+, c^-$ and call these the \textit{upper sheet} and \textit{lower sheet}.  Each Reeb chord has a sign $\sigma(c)$ defined as follows.  Let $V_+ := (\Pi_{P})_*(T_{c^+}L)$ and $V_- := (\Pi_{P})_*(T_{c^-}L)$.  Since $c$ corresponds to a transverse double point in the Lagrangian projection, $V_+ \oplus V_-$ span $T_{\bar{c}}P$.  If the orientation of $V_+ \oplus V_-$ agrees with the orientation of $P$, then $\sigma(c) = 1$; otherwise $\sigma(c) = -1$.

Let $(M, \alpha)$ be an oriented contact manifold with contact structure $\xi = \text{ker}(\alpha)$ and recall that the 2-form $d \alpha$ restricts to a symplectic form on $\xi$.  If a submanifold $L$ is isotropic, then $TL \subset \xi|_L$ and define $TL^{\perp}$ to be the symplectic subbundle of $\xi|_L$ whose fibers are the symplectic orthogonal complements to the fibers of $TL$.  The \textit{conformal symplectic normal bundle} of $L$ in $M$ is the quotient bundle
\[CSN_M(L) = TL^{\perp}/TL\]
If dim$M = 2n+1$ and dim $L = m$ then $CSN_M(L)$ has rank $2n-2m$.  If we choose an almost complex structure $J$ on $\xi$ compatible with $d \alpha$, then the normal bundle of $L$ in $M$ splits as
\[NL = <R_{\alpha}> \oplus J(TL) \oplus CSN_M(L)\]
The contact form restricts to a contact form on the fibers of $< R_{\alpha} > \oplus CSN_M(L)$ and so it is a contact subbundle of $NL$.  Furthermore, there exists a contactomorphism between suitable neighborhoods of $L \subset M$ and the 0-section of $J^1(L) \oplus CSN_M(L)$.
Similarly, given a contact embedding $(M, \xi) \hookrightarrow (M', \xi')$, the \textit{conformal symplectic normal} bundle $CSN_{M'}(M)$ is the symplectic subbundle $(\xi)^{\perp} \subset \xi'|_M$ given by taking the symplectic orthogonal complement to $\xi$ in $\xi'|_M$.  This bundle can be identified with the normal bundle $NM$ of $M$ in $M'$.  There also exists a contactomorphism between suitable neighborhoods of $M \subset M'$ and the 0-section of $CSN_{M'}(M)$.

\subsection*{Maslov Class}
Let $\Lambda_n$ be the Grassman manifold of Lagrangian subspaces in the standard affine symplectic space $(\RR^{2n},\omega)$.  Fix some Lagrangian subspace $\Lambda \in \Lambda_n$ and let $\Sigma_k \subset \Lambda_n$ be the set of all Lagrangian planes in $\RR^{2n}$ that intersect $\Lambda$ along a subspace of dimension $k$.  Then the \textit{Maslov cycle} is the algebraic subvariety
\[\Sigma = \overline{\Sigma_1} = \Sigma_1 \cup \dots \cup \Sigma_n\]
which has codimension 1 in $\Lambda_n$.  For a path $\Gamma: [0,1] \rightarrow \Lambda_n$, we can define an intersection number of $\Gamma$ and $\Sigma$ as follows.  Fix a Lagrangian complement $W$ to $\Lambda$ and suppose that $\Gamma(t')$ intersects $\Sigma$.  For $t$ near $t'$, there exists a family $w(t) \in W$ of vectors such that for all $v \in \Gamma(t') \cap \Sigma$ the vector $v + w(t) \in \Gamma(t)$.  Then there is a quadratic form $Q = \frac{d}{dt}|_{t'} \omega(v, w(t))$ on $\Gamma(t') \cap \Sigma$ and the signature of this quadratic form is the intersection number of $\Gamma$ at $t'$.  

If $\Gamma$ is a loop, then the \textit{Maslov index} $\mu(\Gamma)$ is the total intersection number of $\Gamma$ with $\Sigma$.  The map $\mu$ defines an isomorphism $H_1(\Lambda_n) \simeq \pi_1(\Lambda_n) \simeq \mathbb{Z}$.

Let $\overline{L} \subset \RR^{2n}$ be an immersed Lagrangian.  A global trivialization of $T\RR^{2n}$ induces a map $f: L \rightarrow \Lambda_n$, where each point is sent to its Lagrangian tangent plane.  The \textit{Maslov class} is the pullback of a generator $m$ of $H^1(\Lambda_n;\mathbb{Z})$
\[\mu_L := f^*(m)\]

\section{Classical Invariants}
\label{sec:TB}

The goal of this section is to prove theorems \ref{thrm:TB} and \ref{thrm:infty}.

One problem with this product is that while each factor is chord generic, the product is not.  In fact, each Reeb chord is part of some family of Reeb chords given by either $a_i \times L$ or $K \times b_j$.  In order to make $K \times L$ chord generic, we must perturb it slightly.  Let $f,g$ be $C^1$-small Morse functions on $K,L$ whose critical points are away from the endpoints of the Reeb chords and such that the endpoints lie in different level sets (i.e. $f(a_i^+) \neq f(a_i^-), g(b_j^+) \neq g(b_j^-)$).  Thus, there exist neighborhoods $U_i^+,U_i^-$ of $a_i^+,a_i^-$ such that $f(U_i^+)$ and $f(U_i^-)$ are disjoint and similarly there exist such neighborhoods $W_j^+,W_j^-$ of each $b_j^+,b_j^-$.  Denote the critical points of $f$ by $m^1_k$ and the critical points of $g$ by $m^2_l$.  We can identify a small neighborhood of $K \times L$ with a neighborhood of the 0-section in its 1-jet space $J^1(K \times L)$ and perturb it by a Legendrian isotopy to the graph of $fg$ in the 1-jet space $J^1(K \times L)$.  

\begin{lemma}
The perturbed Legendrian is chord generic and has the following Reeb chords:
\begin{itemize}
\item \textbf{Reeb/Morse}: one for each pair $(a_i, m_l^2)$ of Reeb chord for $K$, Morse critical point of $L$, denoted $a_i \otimes m_l^2$
\item \textbf{Morse/Reeb}: one for each pair $(m_k^1, b_j)$ of Morse critical point of $K$, Reeb chord for $L$, denoted $m_k^1 \otimes b_j$
\item \textbf{Reeb/Reeb}: two for each pair $(a_i, b_j)$ of Reeb chord for $K$, Reeb chord for $L$, denoted $c_{i,j}$ and $d_{i,j}$
\end{itemize}
\end{lemma}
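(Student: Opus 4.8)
The plan is to analyze how the self-intersections of the Lagrangian projection of $K \times L$ break up under the perturbation by the $1$-jet of $fg$, treating each type of self-intersection family separately. First I would describe the self-intersection structure of $\bar{K} \times \bar{L}$ \emph{before} perturbing. Writing a pair of distinct points as $p = (x,y)$, $p' = (x',y')$ and letting $\iota \colon K \times L \to P \times Q$ denote the Lagrangian projection, the coincidence $\iota(p) = \iota(p')$ forces $\bar{x} = \bar{x'}$ and $\bar{y} = \bar{y'}$. By chord genericity of each factor this yields exactly three cases: (A) $x = x'$ and $\{y,y'\} = \{b_j^+, b_j^-\}$, giving a copy $K \times b_j$ of $K$; (B) $\{x,x'\} = \{a_i^+, a_i^-\}$ and $y = y'$, giving a copy $a_i \times L$ of $L$; and (C) $\{x,x'\} = \{a_i^+,a_i^-\}$, $\{y,y'\}=\{b_j^+,b_j^-\}$, giving two isolated points for each pair $(i,j)$. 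Families (A) and (B) are positive-dimensional, which is exactly why $K \times L$ fails to be chord generic.

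Next I would record the effect of the perturbation. Using the symplectic identification of a Weinstein neighborhood of $\bar{K} \times \bar{L}$ with a neighborhood of the zero section of $T^*(K \times L)$, the Lagrangian projection of the perturbed Legendrian becomes the graph of the exact one-form $d(fg)$, while its height function becomes $Z_K + Z_L + fg$. Thus near a self-intersection the two sheets through $p$ and $p'$ are displaced in $P \times Q$ by $d(fg)|_p$ and $d(fg)|_{p'}$ respectively, and I can analyze whether the displaced sheets still meet.

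The core of the argument is the local analysis of families (A) and (B); I would treat (B), with (A) symmetric. Near the family $a_i \times L$ the two sheets are the perturbed images of $\{a_i^+\}\times L$ and $\{a_i^-\}\times L$. In the $P$-directions they cross transversally at $\bar{a}_i$ by chord genericity of $K$, and this transverse crossing persists under the small displacement, pinning the $P$-coordinate near $\bar{a}_i$. In the $Q$-directions both sheets project onto $\bar{L}$, displaced by $f(a_i^+)\,dg$ and $f(a_i^-)\,dg$ respectively, so they meet precisely where $(f(a_i^+) - f(a_i^-))\,dg = 0$. Because $f(a_i^+) \neq f(a_i^-)$ by construction, this is the zero locus of $dg$, i.e. the critical points of $g$; since $g$ is Morse each such intersection is transverse. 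Hence family (B) breaks into exactly one transverse double point for each pair $(a_i, m_l^2)$, of action $\approx \mathcal{Z}(a_i) \neq 0$ — the Reeb/Morse chords $a_i \otimes m_l^2$ — and symmetrically (A) yields the Morse/Reeb chords $m_k^1 \otimes b_j$. The isolated points of case (C) are already transverse (their tangent sheets are $V_\pm^K \oplus V_\pm^L$, transverse by genericity of both factors) with nonzero action $\mathcal{Z}(a_i) \pm \mathcal{Z}(b_j)$, using that the Reeb chord action sets are disjoint; they therefore survive the small perturbation unchanged, giving the two Reeb/Reeb chords $c_{i,j}, d_{i,j}$ for each $(i,j)$.

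Finally I would check completeness: the $C^1$-smallness of $f,g$ keeps the complement of the self-intersection families embedded without creating new crossings, and placing the critical points of $f,g$ away from the Reeb chord endpoints ensures the three types of chords cannot collide, so the displayed list is exhaustive and every chord is transverse. The main obstacle is precisely the local model for families (A) and (B): verifying that matching the two displaced sheets in the fiber directions reduces exactly to $dg = 0$ (resp. $df = 0$), and that the Morse nondegeneracy is what makes the resulting double points transverse rather than some new degeneracy.
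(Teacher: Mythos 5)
Your proposal is correct and follows essentially the same route as the paper: decompose the self-intersection locus of $\bar{K}\times\bar{L}$ into the two positive-dimensional families and the isolated points, use the displacement formula $(\bar{x}+g(y)\,df(x),\,\bar{y}+f(x)\,dg(y))$ together with the condition $f(a_i^+)\neq f(a_i^-)$ (resp.\ $g(b_j^+)\neq g(b_j^-)$) to cut each family down to the Morse critical points of the other factor, and invoke persistence of the transverse double points under a $C^1$-small perturbation for the $c_{i,j}$, $d_{i,j}$. You spell out the fiber-direction reduction to $dg=0$ and the role of Morse nondegeneracy more explicitly than the paper does, but the underlying argument is the same.
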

We will refer to these as A-chords, B-chords, C-chords and D-chords, respectively.
\begin{proof}
In the Lagrangian projection, a neighborhood of $\overline{K \times L}$ is symplectomorphic to a neighborhood $\eta(0)$ of the 0-section of the cotangent bundle.  Moreover, we can assume that the map $v(0) \rightarrow P \times Q$ is injective away from $T^*(U_i^{\pm} \times W_j^{\pm})|_{\eta}$.  The perturbation pushes $K \times L$ off to the graph of $d(fg) = g df + f dg$.  Let $x,y$ denote points in $K,L$ and $\bar{x}, \bar{y}$ denote those points projected to $P, Q$.  Then the perturbation maps $(\bar{x}, \bar{y})$ in $P \times Q$ to $(\bar{x} + g(y) df(x), \bar{y} + f(x) dg(y))$. 

Now, suppose that $(\bar{x} + g(y) df(x), \bar{y} + f(x) dg(y)) = (\bar{x'} + g(y') df(x'), \bar{y'} + f(x') dg(y'))$ for some $x,x' \in K, y,y' \in L$.  If $x = x'$, then either $x$ is a Morse critical point or $y,y'$ lie in the same level set. In the first case, we get intersection points coming from the intersection points of $f(x) dg$, the pushoff, whose intersection points are in 1-1 correspondance with the intersection points of $L$.  This gives the A-chords.  The second is impossible, since if $y,y'$ are distinct then we must have (up to relabeling) that $y \in W_j^{+}$ and $y' \in W_j^-$ and so $y,y'$ cannot lie in the same level set.  Repeating this for $y = y'$ will yield the B-chords.

Finally, consider when both pairs $x,x'$ and $y,y'$ are distinct.  Prior to the perturbation, there was a unique transverse intersection point of $\overline{U_i^+} \times \overline{W_j^+}$ and $\overline{U_i^-} \times \overline{W_j^-}$.  Similarly, there was a unique transverse intersection point of $\overline{U_i^+} \times \overline{W_j^-}$ and $\overline{U_i^-} \times \overline{W_j^+}$.  Since the perturbation is $C^1$-small, we can assume that after the perturbation, there remains a unique intersection point in each case.  Thus, the intersection point $(\bar{x},\bar{y}) = (\bar{x'},\bar{y'})$ under consideration must be one of these two; the first case we label $d_{i,j}$ and the second we label $c_{i,j}$.
\end{proof}

We can compute the signs of the intersection points as well:

\begin{lemma}
\label{lem:sign}
The Reeb chords of the perturbation have the following actions
\begin{itemize}
\item $\mathcal{Z}(a_i \otimes m_l^2) \approx \mathcal{Z}(a_i)$
\item $\mathcal{Z}(m_k^1 \otimes b_j) \approx \mathcal{Z}(b_j)$
\item $\mathcal{Z}(c_{i,j}) \approx |\mathcal{Z}(a_i) - \mathcal{Z}(b_j)|$
\item $\mathcal{Z}(d_{i,j}) \approx \mathcal{Z}(a_i) + \mathcal{Z}(b_j)$
\end{itemize}
and signs
\begin{itemize}
\item $\sigma(a_i \otimes m_l^1) = (-1)^{mn} \sigma(a_i) \sigma(m_l^2)$
\item $\sigma(m_k^1 \otimes b_j) = (-1)^{mn} \sigma(m_k^1) \sigma(b_j)$
\item $\sigma(c_{i,j}) = (-1)^{mn} \sigma(a_i) \sigma(b_j) \tau(i,j)$
\item $\sigma(d_{i,j}) = (-1)^{mn} \sigma(a_i) \sigma(b_j)$
\end{itemize}
where
\[
  \tau(i,j) = \left\{
  \begin{array}{l l}
    (-1)^{n} & \quad \text{if $\mathcal{Z}(a_i) < \mathcal{Z}(b_j)$}\\
    (-1)^{m} & \quad \text{if $\mathcal{Z}(a_i) > \mathcal{Z}(b_j)$}\\
  \end{array} \right.
\]
\end{lemma}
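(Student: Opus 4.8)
The plan is to treat all four chord types uniformly: for each double point I would write down the two sheets meeting there, record the $z$-height of each sheet, and record the two projected tangent planes $V_+,V_-$; the action is then the height difference $\mathcal{Z}(c)=|Z(c^+)-Z(c^-)|$ and the sign is the orientation of $V_+\oplus V_-$ relative to the product orientation of $P\times Q$. The two structural inputs are that, after perturbation, the sheet through $(x,y)$ has $z$-coordinate $Z(x)+Z(y)+f(x)g(y)$, and that away from the $C^1$-small $fg$-correction the projected tangent plane of a sheet splits as the direct sum of the corresponding factor tangent planes. Throughout, write $x^\pm$ for the two endpoints of $a_i$ (projecting to the common double point of $\bar K$) and $y^\pm$ for those of $b_j$, with $+$ denoting the upper endpoint.

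For the actions, $C^1$-smallness makes the $f(x)g(y)$ term negligible, so each action is a difference of the unperturbed heights $Z(x)+Z(y)$. For $d_{i,j}$ the sheets sit at $(x^+,y^+)$ and $(x^-,y^-)$, giving $\mathcal{Z}(a_i)+\mathcal{Z}(b_j)$; for $c_{i,j}$ they sit at $(x^+,y^-)$ and $(x^-,y^+)$, giving $|\mathcal{Z}(a_i)-\mathcal{Z}(b_j)|$; for the $A$- and $B$-chords one factor sits at a critical point, so only the other factor contributes a height difference, namely $\mathcal{Z}(a_i)$ and $\mathcal{Z}(b_j)$. Crucially, this same computation records which sheet is \emph{upper}: for $c_{i,j}$ the upper sheet is $(x^+,y^-)$ exactly when $\mathcal{Z}(a_i)>\mathcal{Z}(b_j)$, and $(x^-,y^+)$ otherwise, and it is this flip that produces $\tau$.

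For the signs of $c_{i,j}$ and $d_{i,j}$, set $V_K^\pm=(\Pi_P)_*T_{x^\pm}K$ and $V_L^\pm=(\Pi_Q)_*T_{y^\pm}L$, so that $\sigma(a_i)$ is the sign of $V_K^+\oplus V_K^-$ in $P$, $\sigma(b_j)$ the sign of $V_L^+\oplus V_L^-$ in $Q$, and the reference ordering $V_K^+\oplus V_K^-\oplus V_L^+\oplus V_L^-$ carries sign $\sigma(a_i)\sigma(b_j)$ in $P\times Q$. For $d_{i,j}$ the actual ordering $V_K^+\oplus V_L^+\oplus V_K^-\oplus V_L^-$ differs from the reference by transposing the two middle blocks, of dimensions $n$ and $m$, producing the universal factor $(-1)^{mn}$ and hence $\sigma(d_{i,j})=(-1)^{mn}\sigma(a_i)\sigma(b_j)$. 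For $c_{i,j}$ the ordering of $V_+\oplus V_-$ is $V_K^+\oplus V_L^-\oplus V_K^-\oplus V_L^+$ or its $V_+\leftrightarrow V_-$ swap, according to the sign of $\mathcal{Z}(a_i)-\mathcal{Z}(b_j)$ determined above; evaluating the two block permutations gives the extra factor $(-1)^m$ when $\mathcal{Z}(a_i)>\mathcal{Z}(b_j)$ and $(-1)^n$ when $\mathcal{Z}(a_i)<\mathcal{Z}(b_j)$, that is, exactly $\tau(i,j)$. As a consistency check, the two cases differ by swapping $V_+$ and $V_-$, each of dimension $n+m$, a change of $(-1)^{n+m}$, which agrees with $(-1)^m/(-1)^n$.

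The $A$- and $B$-chords require handling a Morse critical point, and this is where I expect the main obstacle. For $a_i\otimes m_l^2$ the sheet over the critical point $m_l^2$ of $g$ has $dg=0$, so both sheets meet the same point of $Q$; transversality in the $Q$-factor instead comes from the two sheets being the graphs of $f(x^+)\,dg$ and $f(x^-)\,dg$, which cross transversally at $m_l^2$ precisely because $f(a_i^+)\neq f(a_i^-)$ and $\Hess(g)$ is nondegenerate. The $K$-factor contributes $\sigma(a_i)$ and the middle-block transposition again contributes $(-1)^{mn}$, so the claim reduces to identifying the $Q$-contribution with the Morse sign $\sigma(m_l^2)$. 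The delicate point is that this $Q$-contribution is the sign of $\det\big(f(a_i^+)\Hess(g)-f(a_i^-)\Hess(g)\big)=\big(f(a_i^+)-f(a_i^-)\big)^m\det\Hess(g)$, which equals $(-1)^{\operatorname{ind}(m_l^2)}=\sigma(m_l^2)$ only after normalizing $f$ (and symmetrically $g$) to take its larger value at the lower endpoint of each Reeb chord. Checking that such a normalization is compatible with keeping $f,g$ $C^1$-small, and that it forces the $Q$-factor sign to be exactly $\sigma(m_l^2)$ — equivalently that $\sum_l\sigma(m_l^2)=\chi(T^*L)$ — is where the careful orientation bookkeeping concentrates; the $B$-chords are identical with the roles of the two factors exchanged.
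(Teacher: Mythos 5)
Your proposal is correct and follows essentially the same route as the paper: identify the projected tangent planes of the upper and lower sheets at each double point as direct sums $V_K^{\pm}\oplus V_L^{\pm}$, read the actions off the unperturbed heights $Z(x)+Z(y)$, and extract the universal $(-1)^{mn}$ and the case-dependent $\tau(i,j)$ as block-permutation signs. You are in fact more careful than the paper on two points: your determination of which sheet of $c_{i,j}$ is upper is the one consistent with the stated formula (the paper's proof text has those labels reversed), and the paper merely asserts the Morse-point sign identification for the $A$- and $B$-chords that you correctly isolate as the place where the remaining orientation bookkeeping concentrates.
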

\begin{proof}
As above, let $V_+ := (\Pi_P)_*(T_{a^+}K)$ and $V_- := (\Pi_P)_*(T_{a^-}K)$, which we think of as the tangent planes to the upper and lower sheets $\bar{U_+}, \bar{U_-}$ in $P$ at $\bar{a}$, and define $W_+,W_-$ similarly for $L$.  Then for the Reeb/Morse chords, the tangent planes of the upper and lower sheets at $\overline{a_i \otimes m^1_l}$ are given by $V_+ \oplus W_+$ and $V_- \oplus W_-$.  The sign of the Reeb chord is given by the orientation of $V_+ \oplus W_+ \oplus V_- \oplus W_-$, which is $(-1)^{mn}$ times the orientation given by $V_1+ \oplus V_- \oplus W_+ \oplus W_-$, whose sign is given by $\sigma(a_i) \sigma( m_k^2)$.  The situation is similar for the Morse/Reeb chords and each $d_{i,j}$.  

However, for the $c_{i,j}$, the calculation is different.  If $\mathcal{Z}(a_i) < \mathcal{Z}(b_j)$, the tangent plane to the upper sheet at $\bar{c_i,j}$ is given by $V_+ \oplus W_-$ and to the lower sheet by $V_- \oplus W_+$.  Thus, the orientation on $V_+ \oplus W_- \oplus V_- \oplus W_+$ is $(-1)^{mn}(-1)^{m^2}$ times that of $V_+ \oplus V_- \oplus W_+ \oplus W_-$, which is given by $\sigma(a_i) \sigma(b_j)$.  But if $\mathcal{Z}(a_i) > \mathcal{Z}(b_j)$, then the sign of the Reeb chord is given by the orientation of $V_- \oplus W_+ \oplus V_+ \oplus W_-$, which differs from $\sigma(a_i) \sigma(b_j)$ by $(-1)^{mn}(-1)^{n^2}$.
\end{proof}

We can now prove Theorem \ref{thrm:TB}.  Suppose that $\bar{L}$, the projection of $L$ to $P$, has a finite number of transverse double points.  Then these double points are in one-to-one correspondance with the Reeb chords and $L$ is called chord generic.  Each chord $c$ can then be assigned a sign $\sigma(c) = \pm 1$ and in \cite{high}, it is shown that the Thurston-Bennequin invariant is given by
\[tb(L) = \sum_c{\sigma(c)}\]
Proof of Theorem \ref{thrm:TB}
\begin{proof}
The Thurston-Bennequin calculation follows directly from lemma \ref{lem:sign} by summing over all indices $i,j,k,l$.

In order to calclate the Maslov class, we can make the necessary choices so that each condition splits.  Specifically, take $\Lambda' \in \Lambda_n$ and $\Lambda'' \in \Lambda_m$ and associated Maslov cycles $\Sigma(\Lambda'), \Sigma(\Lambda'')$.  Then $\Lambda = \Lambda' \oplus \Lambda''$ is Lagrangian in $\RR^{2n+2m}$ and so defines a Maslov cycle $\Sigma(\Lambda)$ that splits as
\[\Sigma(\Lambda)_k = \sum_{i + j = k} \Sigma(\Lambda')_i \oplus \Sigma(\Lambda'')_j\]
Moreover, we can choose Lagrangian complements such that $W = W' \oplus W''$.

Take a path $\Gamma \in \Lambda_{n+m}$ and its projections $\Gamma' \in \Lambda_n, \Gamma'' \in \Lambda_m$.  At each intersection point $\Gamma(t') \subset \Sigma(\Lambda')$, the signature of the associated quadratic form is the sum of the signatures of the associated quadratic forms for the intersections of $\Gamma'(t'), \Gamma''(t')$ and $\Sigma(\Lambda'), \Sigma(\Lambda'')$.

Thus, the Maslov index splits as $\mu(\Gamma) = \mu(\Gamma') + \mu(\Gamma'')$.  Therefore, it follows that the Maslov class $\mu_{L \times K}$ splits as well.
\end{proof}

\begin{remark}
Notice that this formula is consistent with the result in \cite{high} that for $L$ of even dimension, the Thurston-Bennequin number is $-\frac{1}{2} \chi (\nu)$, where $\nu$ is the oriented normal bundle to $L$ in the Lagrangian projection.  Suppose that the dimensions of $K$ and $L$ have the same parity.  Then the dimension of their product is even and so this result applies.  If $n,m$ are odd, then the Euler characteristic of both manifolds vanish by Poincare duality, the Euler characteristic of their product vanishes, and $\tau_{i,j}$ is always negative.  Thus, we get that:
\[tb(L_1 \times L_2)  = -( 0 + 0  + tb(K) tb(L) - tb(K) tb(L)) = 0 = \chi (K \times L)\]
If $n,m$ are even, then $\tau_{i,j}$ is always 1 and we can use the immersed version of the Lagrangian Neighborhood Theorem to identify $T^*K$ with $\nu_K$ and $T^*L$ with $\nu_L$ and obtain
\begin{align*}
tb(K \times L) &= \left( -\frac{1}{2} \right) \chi(\nu_K)  \chi (\nu_L) + \chi(\nu_K)  \left(-\frac{1}{2}\right) \chi (\nu_L) \\
	&+ \left(-\frac{1}{2}\right) \chi(\nu_K)  \left(-\frac{1}{2}\right) \chi (\nu_L) + \left(-\frac{1}{2}\right) \chi(\nu_K)  \left(-\frac{1}{2}\right) \chi (\nu_L) \\
 &= - \frac{1}{2}  \chi(\nu_K)  \chi (\nu_L) \\
 &= -\frac{1}{2}\chi (\nu_{K \times L})
\end{align*}
since it is clear from the construction that the normal bundle of the product is the product of the normal bundles.
\end{remark}

Proof of theorem \ref{thrm:infty}
\begin{proof}
Choose some Darboux ball of radius $3\epsilon$ and isotope $K$ so that its intersection with the Darboux ball is two disjoint disks given by two parallel Lagrangian planes of distance $\epsilon$ apart.  By a Hamiltonian isotopy supported in the Darboux ball, we can add two canceling transverse double points corresponding to two Reeb chords $b,a$, labeled so that $\mathcal{Z}(a) > \mathcal{Z}(b)$.  Now, by scaling either $K$ or $L$ and a perturbation, we can assume that there is exactly one Reeb chord $e$ of $L$ such that $\mathcal{Z}(a) > \mathcal{Z}(e) > \mathcal{Z}(b)$.  For all other Reeb chords $e'$ of $L$, the terms $\tau(a,e') \sigma(a) \sigma(e')$ and $\tau(b,e') \sigma(b) \sigma (e')$ cancel.  However, $\tau(a,e) \sigma(a) \sigma(e)$ and $\tau(b,e) \sigma(b) \sigma (e)$ have the same sign since the pairs $\tau(a,e),\tau(b,e)$ and $\sigma(a), \sigma(b)$ are distinct.  Moreover, we can add aribtrarily many pairs of Reeb chord pairs $\{(a_i,b_i)\}$ so that $\mathcal{Z}(a_i) = \mathcal{Z}(a_j)$ and $\mathcal{Z}(b_i) = \mathcal{Z}(b_j)$ for all $i,j$.  Thus, we can add $2n * (\tau(a,e) \sigma(a) \sigma(e))$ to the Thurston-Bennequin invariant for arbitrary nonnegative integer $n$.
\end{proof}

\section{Spinning}
\label{sec:spin}
The goal of this section is to prove theorem \ref{thrm:spin} and corollary \ref{cor:front}.  We first define frontspinning as in \cite{high}, then give an alternate, invariant description of this construction called spinning before proving that spinning and the Legendrian product are equivalent in $\RR^{2n+1}$.

Frontspinning is a construction defined by Ekholm, Etnyre and Sullivan that takes a Legendrian $L \subset \RR^{2n+1}$ and produces a Legendrian $\Sigma L \subset \RR^{2n+3}$ of topological type $L \times S^1$.  Suppose that a Legendrian $L \subset \RR^{2n+1}$ is given by the embedding $f: L \rightarrow \RR^{2n+1}$ and parametrized such that
	\[f(L) = (x_1(L), \dots, x_n(L), y_1 (L), \dots, y_n(L), z(L))\]
Let $\Pi_F: \RR^{2n+1} \rightarrow \RR^{n+1}$ be the projection onto the $x$ and $z$ coordinates.  The \textit{front projection} of $L$ is the subsvariety
\[ \Pi_F(L) = (x_1(L), \dots, x_n(L), z(L))\]
If $S$ is a subvariety of $\RR^{n+1}$ such that $\frac{\partial}{\partial z} \notin T_x S$ for any $x \in S$, then $S$ lifts to an isotropic subvariety of $\RR^{2n+1}$ as the $y$-coordinates can be chosen at each point to satisfy the contact condition.  The \textit{frontspinning} of $L$ is the lift to $\RR^{2n+3}$ of the following subvariety of $\RR^{n+2}$:
\[S = (\cos \theta x_1(L), \sin \theta x_1 (L), x_2(L), \dots, x_n(L), z(L))\]
for $\theta \in [0,2 \pi]$.  This can be thought of as spinning the front projection of $L$ around the plane $x_0 = x_1 = 0$ in $\RR^{n+2}$ and the intersections of $S$ with the planes $(\cos \theta x_1, \sin \theta x_1, x_2, \dots, x_n, z)$ give a family of front projections of $L$ as $\theta$ varies.

We now give an alternate interpretation of this construction that generalizes to arbitrary contact manifolds.  Choose some Legendrian $K \subset M$, a contact embedding $M \hookrightarrow M'$ of codimension $2m$ with trivial $CSN_{M'}(M)$ and a Legendrian $L \subset \RR^{2m+1}$.  By scaling and translation, we can assume that $L$ lies in a suitably small neighborhood of the origin.  It follows that the conformal symplectic normal bundle of $K$ in $M'$ splits as
\[CSN_{M'}(K) = CSN_M(K) \oplus CSN_{M'}(M)\]
and by assumption, since $K$ is Legendrian, this bundle is trivial of rank $2m$.  
\begin{definition}
The \textit{spinning} of $L$ by $K$, denoted $K \times_M L$, is the Legendrian submanifold of $M'$ obtained as the image of $K \times L$ under the contactomorphism that identifies neighborhoods of $K$ in $J^1(K) \times \RR^{2m}$ and $M'$.
\end{definition}
It is clear from the construction that ambient contact isotopies of $K, M$ and $L$ extend to Legendrian isotopies of $K \times_M L$, provided that $L$ is contained in a suitably small neighborhood of 0.

Consider $(S^3, \xi_{std})$ as the unit sphere in $\RR^4$.  Then for the embedding $(S^3, \xi_{std}) \hookrightarrow \RR^{2n+1} \simeq \RR^4 \times \RR^{2n-3}$, the conformal symplectic normal bundle $CSN_{\RR^{2n+1}}(S^3)$ is trivial.  Let $W$ be the submanifold
\[W = (\cos \theta, 0 , \sin \theta, 0) \text{,  } \theta \in S^1\]
It is easy to verify that W is Legendrian and isotopic to the standard Legendrian unknot with $tb = -1, r = 0$.

\begin{lemma}
\label{lemma:sigmaL}
The Legendrian submanifolds $\Sigma L$ and $W \times_{S^3} L$ are identical.
\end{lemma}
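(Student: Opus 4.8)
The plan is to realize both Legendrians as explicitly parametrized subsets of $\RR^{2n+1}=\CC^2\times\RR^{2n-3}$, each indexed by $L\times S^1$, and to check that the two parametrizations agree on the nose, so that the coincidence is a genuine equality of subsets rather than merely an isotopy. Write a point of $L\subset\RR^{2n-1}$ in coordinates $(x_1,\dots,x_{n-1},y_1,\dots,y_{n-1},z)$ and abbreviate $w=x_1+\imaginary y_1$ and $p'=(x_2,y_2,\dots,x_{n-1},y_{n-1},z)$. First I would make frontspinning completely explicit. Its front is $S=(\cos\theta\,x_1,\sin\theta\,x_1,x_2,\dots,x_{n-1},z)$, and solving the Legendrian lift condition $dZ=\sum_i Y_i\,dX_i$ for the new $y$-coordinates gives $Y_1=y_1\cos\theta$, $Y_2=y_1\sin\theta$, and $X_{j+1}=x_j,\ Y_{j+1}=y_j$ for $2\le j\le n-1$. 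Hence, in the complex coordinates $z_1=X_1+\imaginary Y_1$, $z_2=X_2+\imaginary Y_2$ on the $\CC^2$-factor, $\Sigma L$ is the image of
\[
(\ell,\theta)\ \longmapsto\ \big(w\cos\theta,\ w\sin\theta,\ p'\big),\qquad(\ell,\theta)\in L\times S^1 .
\]
In other words, frontspinning is exactly the fiberwise complex rescaling of the first coordinate of $L$ by the unit vector $(\cos\theta,\sin\theta)$ that sweeps out $W$.

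Next I would unwind the spinning $W\times_{S^3}L$. Since $CSN_{\RR^{2n+1}}(S^3)$ is trivial and $W$ is Legendrian in $S^3$, a neighborhood of $W$ in $\RR^{2n+1}$ is contactomorphic to a neighborhood of the zero section in $J^1(S^1)\times\RR^{2m}$ with $m=n-1$, and the product $W\times L$ is parametrized by the same $(\theta,\ell)\in S^1\times L$: the factor $W$ contributes the vanishing cotangent $p_\theta=0$ (its Lagrangian projection is the embedded zero section of $T^*S^1$) and the lift carries jet value $z$. The heart of the proof is to write the identifying contactomorphism $\Phi$ in closed form. I would show that, along $p_\theta=0$, $\Phi$ is precisely the rescaling map $(\theta;w,p')\mapsto(w\cos\theta,w\sin\theta,p')$, with $\theta$ the coordinate along $W$, the complex coordinate $w$ (near $1$) parametrizing the normal directions to $W$ inside $\CC^2$, $z$ the jet coordinate, and the remaining $(x_2,y_2,\dots)$ the other normal directions. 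The key verification is the contact condition: pulling back $\alpha=dZ-\sum_i Y_i\,dX_i$ along this map, the first two terms combine as
\[
Y_1\,dX_1+Y_2\,dX_2=y_1\cos\theta\,d(x_1\cos\theta)+y_1\sin\theta\,d(x_1\sin\theta)=y_1\,dx_1 ,
\]
the $d\theta$-contributions cancelling, so that $\Phi^*\alpha=dz-\sum_{i=1}^{n-1}y_i\,dx_i$, which is the standard contact form on $J^1(S^1)\times\RR^{2m}$ restricted to $p_\theta=0$. Thus $\Phi$ is a contactomorphism onto a neighborhood of $W$, and applying it to $W\times L$ reproduces exactly the parametrization of $\Sigma L$ displayed above; comparing the two formulas yields the asserted equality.

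The main obstacle is the second step, and specifically the bookkeeping that matches the abstract normal-form model $J^1(S^1)\times\RR^{2m}$ with the concrete rescaling on $\CC^2\times\RR^{2n-3}$. One must correctly assign the coordinate along $W$ to the spinning angle $\theta$, pin the zero section to the round circle $W=\{(\cos\theta,\sin\theta)\}$ (the locus $w=1$, where the rescaling map is a local diffeomorphism), send the Reeb direction of $\RR^{2n+1}$ to the jet coordinate $z$, and identify the normal directions of $W$ inside $\CC^2$ with the pair $(x_1,y_1)$. In addition one must account for the cotangent direction $p_\theta$ of $S^1$, on which $W\times L$ is supported at $p_\theta=0$ but which must be restored (as the $J(TW)$-direction, the fourth real coordinate of $\CC^2$ at $W$) in order to extend $\Phi$ to a genuine contactomorphism of a full neighborhood. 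Once these identifications are fixed, the contact-form computation above is immediate and the identity $\Sigma L=W\times_{S^3}L$ follows at once.
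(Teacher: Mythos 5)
Your proposal is correct and follows essentially the same route as the paper: both identify the $\theta$-rotation in the frontspinning formula with the trivialization of $\langle R_\alpha\rangle \oplus CSN_{\RR^{2n+1}}(W)$ along $W$, so that $\Sigma L$ is exactly the copy of $L$ placed in each fiber over $W$. Your version is somewhat more explicit than the paper's --- you write the identifying contactomorphism in closed form and verify the contact-form pullback $Y_1\,dX_1 + Y_2\,dX_2 = y_1\,dx_1$, whereas the paper only records the bundle trivializations of $TW$, $J(TW)$, $\langle R_\alpha\rangle$ and $CSN_{\RR^{2n+1}}(W)$ and asserts the conclusion --- but the underlying idea is the same.
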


\begin{proof}
Let $J$ be the standard complex structure on $\CC^2$.  Then we can trivialize $TW, J(TW)$ as
\[T_{\theta}W = <-\sin \theta \partial_{x_1} + \cos \theta \partial_{x_2}>\]
\[J(T_{\theta}W) = < -\sin \theta \partial_{y_1} + \cos \theta \partial_{y_2}>\] 
and trivialize $CSN_{\RR^{2n+1}}(W)$ as
\[CSN_{\RR^{2n+1}}(W) = <\cos \theta \partial_{x_1} + \sin \theta \partial_{x_2}, \del_{x_3}, \del_{y_3}, \dots, \del_{x_n}, \del_{y_n}, \del_z >\]
The Reeb vector field in $S^3$ along $W$ is given by $R = \cos \theta \del_{y_1} + \sin \theta \del _{y_2}$.  It follows from above that the frontspun $\Sigma L$ is obtained by restricting to $L$ in each fiber of the bundle $<R> \oplus CSN_{\RR^{2n+1}}(W)$
\end{proof}

We can now prove theorem \ref{thrm:spin}
\begin{proof}
Choose a Darboux ball around the point $(1, 0, \dots, 0) \in \RR^{2n+1}$, given by some map $f: B^{2n+1} \rightarrow \RR^{2n+1}$ of the unit ball, that restricts to a Darboux ball $f':B^{2k-1} \rightarrow S^{2k-1}$ on $S^{2k-1}$ as well.  Thus, $f(K \times L)$ is exactly $f'(K) \times_{S^{2k-1}} L$.  Choose some Legendrian isotopy $K(t)$ in $S^{2k-1}$ so that $K(0) = K$ and $K(1) = f'(K)$ in this Darboux ball on $S^{2k-1}$.  The isotopy extends to some suitable neighborhood of $K$ and isotope $L$ so that $K \times_{S^{2k-1}} L$ lies in this neighborhood.  The Legendrian isotopy of $K$ thus extends to a Legendrian isotopy from $K \times_{S^{2k-1}} L$ to $f(K \times L)$.

Recall that the contact disk theorem states that for any two contact embeddings $g,h: B^{2n+1} \rightarrow M^{2n+1}$ of the unit ball into a contact manifold, there is a contact isotopy $i: M \rightarrow M$ such that $i  \circ h = g$.  Thus, there exists some isotopy $i: \RR^{2n+1} \rightarrow \RR^{2n+1}$ such that $i \circ f = id$ and this isotopy sends $f(K \times L)$ to $K \times L$.
Furthermore, since $L$ must live in a suitably small neighborhood of the origin, whose diameter must be less than the length of the Reeb chords of $K$, it follows that all its Reeb chord actions must be less than this diameter.
\end{proof}

Corollary \ref{cor:front} now follows directly from lemma \ref{lemma:sigmaL} and theorem \ref{thrm:spin}.

\section{Examples}
\label{sec:exam}
\subsection{Whitney spheres}
A Whitney sphere $W_c^n$ is the Legendrian sphere $S^n \in \RR^{2n+1}$ given as the lift of the following exact Lagrangian immersion in $\CC^n \simeq \RR^{2n}$: 
\[w: S^n = \{(x,y) \in \RR^n \times \RR: |x|^2 + y^2 = 1\} \mapsto c(1 + i y)x\]
where $c$ is some positive real constant.  This immersion has exactly 1 transverse double point at $w(0,1) = w(0,-1)$ and so the Legendrian $W_c^n$ has exactly one Reeb chord, of length determined by $c$.

\begin{example}
Consider the product $W_a^1 \times W_b^2$ of a single 1-sphere and a single 2-sphere.  Since the parities of the dimensions are different, the dimension of the product is odd and the Thurston-Bennequin invariant will be useful.  We have 
\begin{align*}
tb(W_a^1) &= -1 \hspace{.3 in} \chi(T^*S^1) = 0 \\
tb(W_b^2) &= 1 \hspace{.3 in} \chi(T^*S^2) = -2 \\
\end{align*}
Denote the Reeb chords $a$ and $b$.  The $\tau$ factor for this pair is
\[\tau(a,b) = \left\{ \begin{array}{l l}
    (-1)^1 = -1 & \quad \text{if $a < b$}\\
    (-1)^2 = 1 & \quad \text{if $a > b$}
  \end{array} \right. \]
Thus
\[tb(W_a^1 \times W_b^2) = \left\{ \begin{array}{l l}
    2 & \quad \text{if $a < b$}\\
    0 & \quad \text{if $a > b$}
  \end{array} \right. \]
Now, consider the product $W_a^1 \times W_b^4$ of a single 1-sphere and a single 4-sphere.  We have 
\begin{align*}
tb(W_a^1) &= -1 \hspace{.3 in} \chi(T^*S^1) = 0 \\
tb(W_b^4) &= 1 \hspace{.3 in} \chi(T^*S^4) = 2 
\end{align*}
The $\tau$ factor for this pair is
\[\tau(a,b) = \left\{ \begin{array}{l l}
    (-1)^1 = -1 & \quad \text{if $a < b$}\\
    (-1)^4 = 1 & \quad \text{if $a > b$}
  \end{array} \right.\]
Thus
\[tb(W_a^1 \times W_b^4) = \left\{ \begin{array}{l l}
    -2 & \quad \text{if $a < b$}\\
    0 & \quad \text{if $a > b$}
  \end{array} \right. \]
\end{example}
Note that for both examples, the case $a > b$ was already calculated in \cite{high}.
\begin{example}
Let $W_a^1, W_b^1, W_e^1$ be three Whitney unknots.  To calculate the Thurston-Bennequin invariant of $W_a^1 \times W_b^1 \times W_e^1$ apply theorem \ref{thrm:TB} twice.  The torus $W_a^1 \times W_b^1$ has $tb = 0$ and $\chi(T^*T^2) = 0 $ and, after perturbing, has six Reeb chords with actions
\[\mathcal{Z}(a \otimes m_i) \approx \mathcal{Z}(a)\]
\[\mathcal{Z}(m_i \otimes b) \approx \mathcal{Z}(b)\]
\[\mathcal{Z}(c) \approx |\mathcal{Z}(a) - \mathcal{Z}(b)|\]
\[\mathcal{Z}(d) \approx \mathcal{Z}(a) + \mathcal{Z}(b)\]
and signs
\[\sigma(a \otimes m_i) = (-1)\sigma(a)(-1)^i \hspace{.5in} \sigma(m_i \otimes b) = (-1)\sigma(b)(-1)^i\]
\[\sigma(c) = (-1)^2 \sigma(a)\sigma(b) \hspace{.5in} \sigma(d) = (-1) \sigma(a) \sigma(b)\]
where $m_i$ is a unique Morse critical point of index $i$.  Since $tb(W_a^1 \times W_b^1) = \chi(T^*T^2) = \chi(T^*S^1) = 0$ the only potential nonzero term in theorem \ref{thrm:TB} is the $\tau$-term when computing $tb$ for the triple product. 
Note that for the A chords, $\tau(a \otimes m_0, e) = \tau(a \otimes m_1,e)$, so that 
\[\tau(a \otimes m_0, e)\sigma(a \otimes m_0) \sigma(e) + \tau(a \otimes m_1, e)\sigma(a \otimes m_1) \sigma(e) = 0\]
since the signs of the Morse critical points $m_0,m_1$ cancel.  Similarly, the $\tau$ terms involving the B chords cancel.  Now, 
\[\tau(c,e)\sigma(c)\sigma(e) = \left\{ \begin{array}{l l}
    1 & \quad \text{if $|a - b| < e$}\\
    -1 & \quad \text{if $|a - b| > e$}
  \end{array} \right. \]
\[\tau(d,e)\sigma(d)\sigma(e) = \left\{ \begin{array}{l l}
    -1 & \quad \text{if $a + b < e$}\\
    1 & \quad \text{if $a + b > e$}
  \end{array} \right. \]
since $\tau(a,b) = -1$ and $\sigma(a) = \sigma(b) = \sigma(c) = -1$.  
Thus,
\[tb(W_a^1 \times W_b^1 \times W_e^1) = \left\{ \begin{array}{l l}
		2 & \quad \text{if $a + b > e$ and $|a-b| < e$} \\
    0 & \quad \text{otherwise}
  \end{array} \right. \]
In other words, $tb = 2$ if $(a,b,c)$ satisfy the triangle inequality and $tb = 0$ otherwise.
\end{example}

\subsection{Knots}
The product of two Legendrian knots is a torus, whose Euler characteristic and Thurston-Bennequin invariant are 0.  However, there are interesting $tb$ calculations for products of three Legendrian knots.

\begin{example}
Let $K_1, K_2,K_3$ be a collection of three Legendrian knots with Reeb chords $\{ a_i\}, \{ b_j\}, \{ c_k\}$.  Define
\[\tau(a,b,c)= \left\{ 
\begin{array}{l l}
		2 & \quad \textrm{if $a + b > c$ and $|a-b| < c$} \\
    0 & \quad \textrm{otherwise}
  \end{array} \right. \]
Then the arguments in example 5.2 can be repeated for each triple of Reeb chords $(a,b,c)$ and the Thurston-Bennequin invariant is
\[tb(K_1 \times K_2 \times K_3) = \sum_{i,j,k} \tau(a_i,b_j,c_k)\sigma(a_i)\sigma(b_j)\sigma(c_k)\]
Take $K_1$ to be a once-stabilized unknot, $K_2$ a standard unknot after applying a Reidemeister-1 move, and $K_3$ a right-handed trefoil.  These can be chosen to have the front projections and Lagrangian projections depicted in figure 1. 
 
\begin{figure}
\label{fig:projections}
\centering
	\begin{subfigure}[b]{0.3\textwidth}
			\labellist
				\small\hair 2pt
				\pinlabel $a_1$ at 115 85
				\pinlabel $a_2$ at 380 85
			\endlabellist
			\centering
			\includegraphics[width=\textwidth]{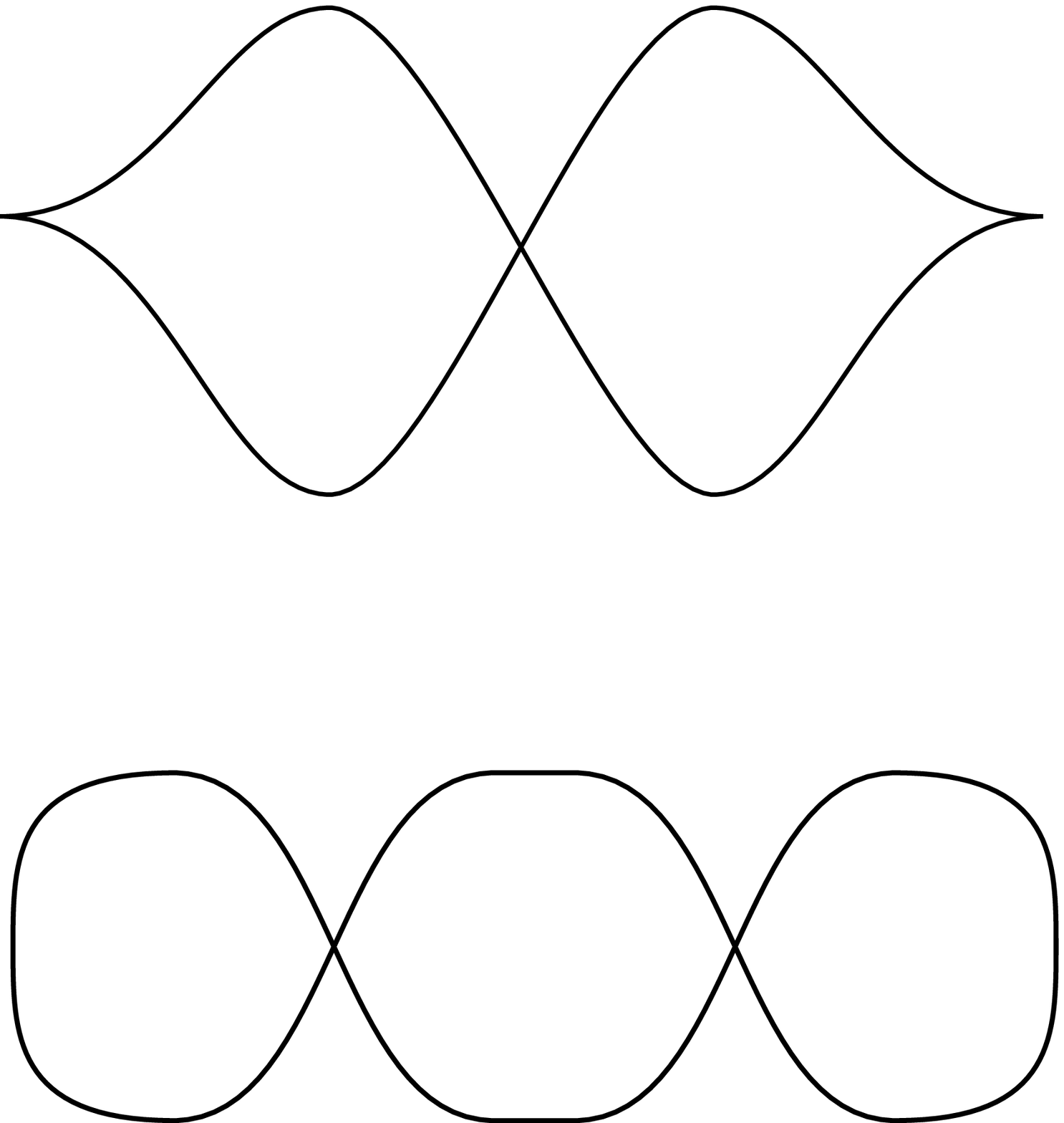}
			\caption{stabilized unknot}
			\label{fig:stabUnknot}
	\end{subfigure}
	\qquad \qquad \qquad
	\begin{subfigure}[b]{0.3\textwidth}
			\labellist
				\small\hair 2pt
				\pinlabel $b_1$ at 175 120
				\pinlabel $b_2$ at 340 170
				\pinlabel $b_3$ at 340 65
			\endlabellist
			\centering
			\includegraphics[width=\textwidth]{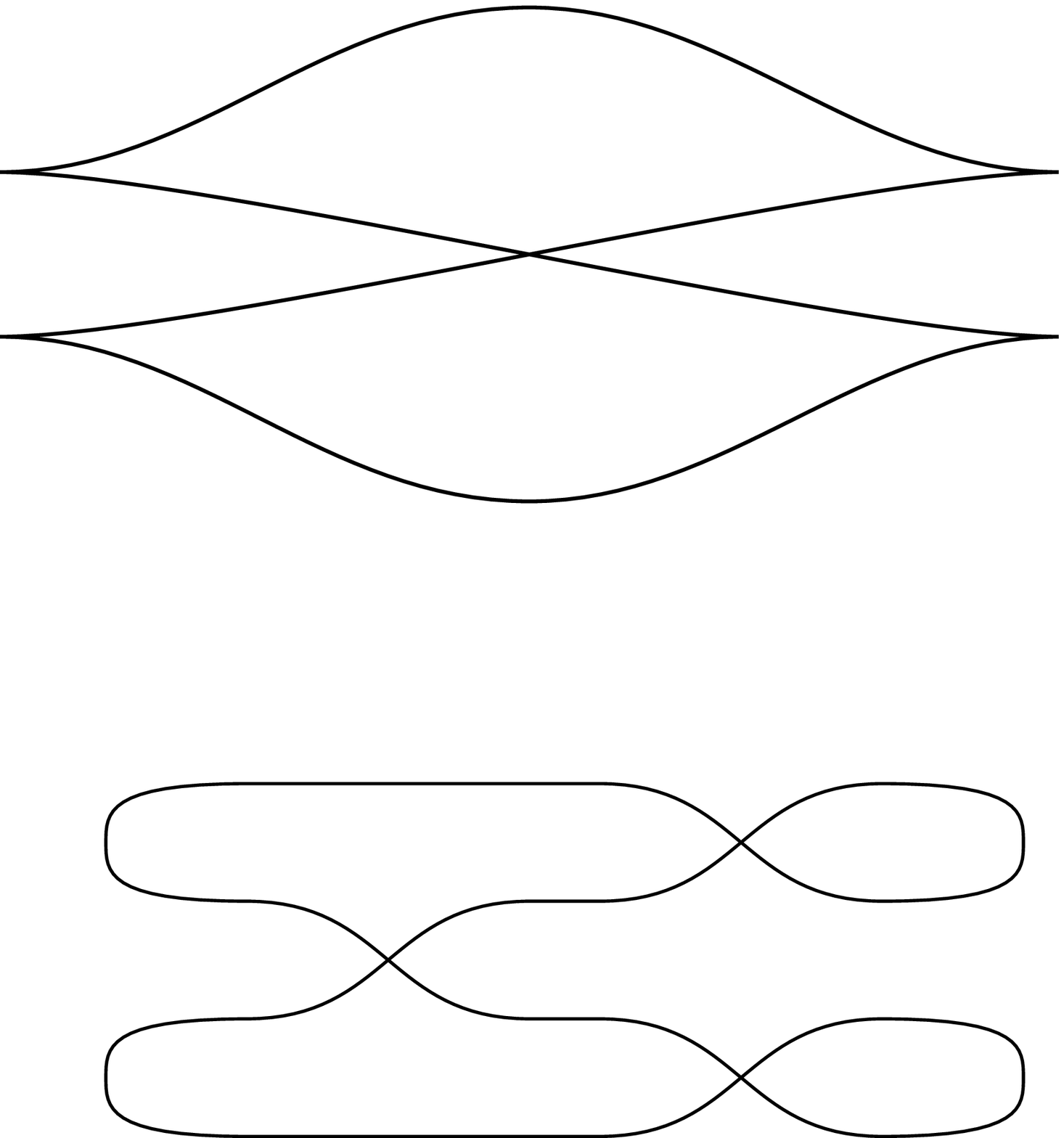}
			\caption{unknot after Reidemeister I}
			\label{fig:reideUnknot}
	\end{subfigure}
	\hspace{.2in}
	\begin{subfigure}[b]{0.3\textwidth}
			\labellist
				\small\hair 2pt
				\pinlabel $c_1$ at 100 120
				\pinlabel $c_2$ at 200 120
				\pinlabel $c_3$ at 300 120
				\pinlabel $c_4$ at 400 180
				\pinlabel $c_5$ at 400 70
			\endlabellist
			\centering
			\includegraphics[width=\textwidth]{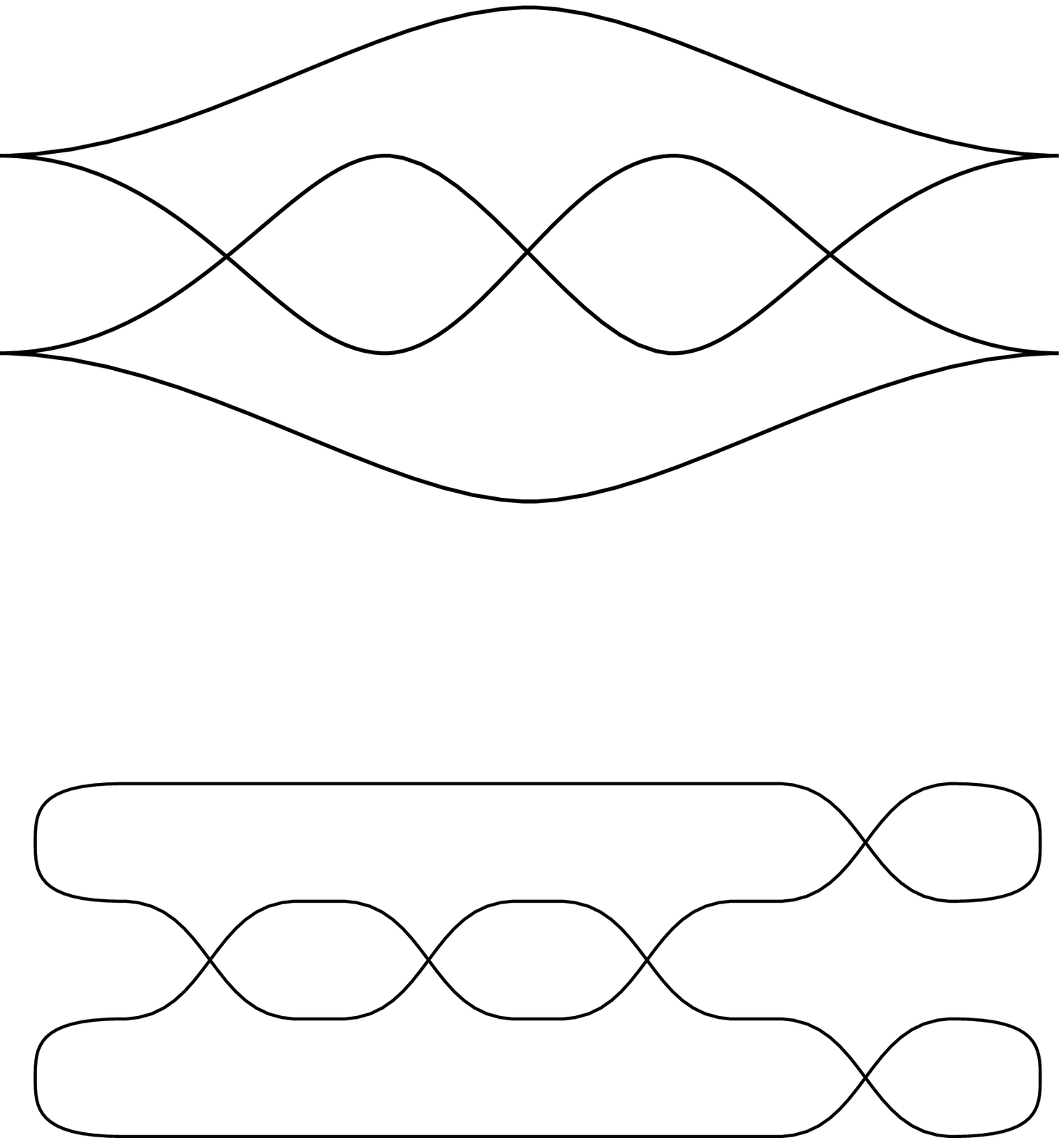}
			\caption{trefoil}
			\label{fig:trefoil}
	\end{subfigure}
\caption{Front and (isotopic) Lagrangian projections of three knots}
\end{figure}

$K_1$ has two Reeb chords $a_1,a_2$; $K_2$ has three Reeb chords $b_1,b_2,b_3$; and $K_3$ has five Reeb chords $c_1,c_2,c_3,c_4,c_5$.  By abuse of notation, $a_i,b_j,c_k$ will refer to both the chord and its action.  These chords have signs
\begin{align*}
\sigma(a_1) &= \sigma(a_2) = -1 \\
\sigma(b_1) &= \sigma(b_2) = -1 & &\sigma(b_3) = 1 \\
\sigma(c_1) &= \sigma(c_2) = -1 & \sigma(c_3) = \sigma(c_4) = &\sigma(c_5) = 1
\end{align*}
so $tb(K_1) = -2$, $tb(K_2) = -1$ and $tb(K_3) = 1$.  The Reeb chord actions cannot be chosen completely arbitrarily as each face of the knot diagram in the Lagrangian projection must satisfy the area identity determined by Stokes's Theorem. For a face $F$ of $K$, the boundary $\partial F$ lies in $\overline{K}$ and the double points of the knot projection split the boundary into segments $\{\overline{\gamma_l}\}$, indexed counter-clockwise.  A corner of the face is \textit{positive} if near the Reeb chord, the $z$-coordinate of $\gamma_{l+1}$ is greater than the $z$-coordinate of $\gamma_{l}$ and \textit{negative} otherwise.
\begin{lemma}
(Area Identity)  Let $A$ be a face of the knot diagram in the Lagrangian projection and $\gamma = \partial A$ be its boundary.  Then
\[\int_A \omega = \sum_p \mathcal{Z}(p) - \sum_n \mathcal{Z}(n)\]
where $p,n$ are the Reeb chords corresponding to the positive and negative corners of $A$
\end{lemma}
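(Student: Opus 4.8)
The plan is to invoke Stokes' theorem and then account for the failure of the $z$-coordinate of the lift to close up as the boundary passes through the double points. Since $\omega = d\lambda$, applying Stokes' theorem to the face $A$ gives
\[\int_A \omega = \int_{\partial A} \lambda = \sum_l \int_{\overline{\gamma_l}} \lambda,\]
where the $\overline{\gamma_l}$ are the smooth arcs of $\overline{K}$ into which the double points subdivide $\partial A$, indexed counter-clockwise as in the statement. The key input is the Legendrian condition recalled in Section \ref{sec:prelim}: along $\overline{K}$ one has $\lambda|_{\overline{K}} = dZ$, where $Z$ is the $z$-coordinate of the lift $K$. Hence each arc integral is simply the net change in height of the lift, $\int_{\overline{\gamma_l}} \lambda = Z(\text{terminal point of } \gamma_l) - Z(\text{initial point of } \gamma_l)$.

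Next I would reassemble these arc contributions. Lifting $\partial A$ to $K$ produces a path that is continuous along each $\overline{\gamma_l}$ but jumps at every corner, since the two arcs meeting at a corner lie on the two distinct sheets $c^+, c^-$ of the corresponding double point. Because the lifted loop, together with the vertical Reeb segments inserted at the corners, genuinely closes up, the sum of the arc height-changes equals the negative of the sum of the corner jumps. Each jump is the height difference between the two sheets at that double point, namely $\pm\,\mathcal{Z}(c)$, and its sign records whether the boundary departs the corner along the upper sheet $c^+$ or the lower sheet $c^-$. This is precisely the dichotomy encoded by the positive/negative corner definition, so after summing the corner jumps organize into $\sum_p \mathcal{Z}(p) - \sum_n \mathcal{Z}(n)$.

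The main obstacle, and really the only subtlety, is the sign bookkeeping at the corners: one must match the local model at a transverse double point, i.e. which sheet passes "over," to both the counter-clockwise orientation of $\partial A$ and the positive/negative convention, and check that the overall orientation of $\int_A \omega$ is consistent with the primitive $\lambda$ and the contact form $\alpha = dz - \lambda$ fixed in Section \ref{sec:prelim}. I would carry this out on the local model of a single face-corner occupying one quadrant at a crossing, where the computation collapses to comparing $Z(c^+)$ with $Z(c^-)$, confirm that the positive/negative labelling reproduces the signs in the stated formula, and then observe that every corner of every face is locally of this form, so the signs are globally consistent and the identity follows by summing over all corners.
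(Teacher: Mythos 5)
Your argument is correct and is precisely the intended one: the paper states this lemma without a written proof, attributing it only to ``the area identity determined by Stokes's Theorem,'' and your Stokes-plus-telescoping computation, using $\lambda|_{\overline{K}} = dZ$ from the Legendrian condition and reorganizing the jumps $Z(c^{\pm})$ at the corners according to the positive/negative corner convention, fills in exactly that argument. The sign bookkeeping you flag is genuinely the only point of care (it depends on the orientation of $P$ and the sign of $\lambda$ in $\alpha = dz - \lambda$), and your plan to verify it on the local model of a single quadrant at a transverse double point settles it.
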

Since each face of a knot diagram must have positive area, it follows that 
\begin{align*}
b_1,b_2 &> b_3 \\
c_1,c_2 &> c_3,c_4,c_5
\end{align*}
and that this is the only restriction on the actions.  With this in mind and without loss of generality, let $a,b_+,b_-,c_+,c_-$ refer to some chord in the sets $\{a_1,a_2\},\{b_1,b_2\},\{ b_3\}, \{c_1, c_2\}, \{c_3, c_4, c_5\}$, respectively.

The minimum $tb$ is achieved if all triples $(a,b_+,c_+), (a,b_-,c_-)$ satisfy the triangle inequality and none of the triples $(a,b_+,c_-), (a,b_-,c_+)$ satisfy it, since $\sigma(a)\sigma(b_+) \sigma(c_+) = \sigma(a)\sigma(b_-) \sigma(c_-) = -1$ and $\sigma(a)\sigma(b_+) \sigma(c_-) = \sigma(a)\sigma(b_-) \sigma(c_+) = 1$.  Then
\[tb = (8)(2)(-1) + (6)(2)(-1) +  12(2)(0) + 4(2)(0) = -28\]
For instance, set $a = 5$, $b_+ = c_+ = 10$ and $b_- = c_- = 3$.  

Now, for a 5-tuple $(a,b_+,b_-,c_+,c_-)$, if both triples $(a,b_+,c_-), (a,b_-,c_+)$ satisfy the triangle inequality, then $(a,b_+,c_+)$ must as well since $b_+ > b_-$ and $c_+ > c_-$.  However, this is not true if only one such triple does.  Thus, since there are more $c_-$ chords than $b_-$ chords, the maximum $tb$ is achieved if all possible triples $(a,b_+, c_-)$ satifies the triangle inequality but no other triple does.  For instance, set $a = 5$, $b_+ = 6$, $c_+ = 12$ and $b_- = c_- = 2$.  Then
\[tb = (8)(2)(0) + (6)(2)(0) + 12(2)(1) + 4(2)(0) = 24\]
By switching the values assigned to the $b$'s and $c$'s, it's possible to achieve many $tb$ values between $-28$ and $24$. 
\end{example}

\end{document}